\newtheorem{thm}{Theorem}
\newtheorem{cor}[thm]{Corollary}
\newtheorem{lemma}[thm]{Lemma}
\theoremstyle{definition}
\theoremstyle{remark}
\numberwithin{thm}{section}
\DeclareMathAlphabet{\mathsfsl}{OT1}{cmss}{m}{sl}
\renewcommand{\phi}{\varphi}
\newcommand{\grad}{\nabla}
\newcommand{\argmax}{\operatorname*{arg\; max}}
\newcommand{\Expect}{\operatorname{\mathbb{E}}}
\newcommand{\range}{\operatorname{range}}
\newcommand{\bx}{\boldsymbol{x}}
\newcommand{\by}{\boldsymbol{y}}
\newcommand{\bz}{\boldsymbol{z}}
\newcommand{\bP}{\boldsymbol{P}}
\def\bbC{\mathbb{C}}
\def\reals{\mathbb{R}}
\def\bx{\boldsymbol{x}}
\def\be{\boldsymbol{e}}
\def\b0{\mathbf{0}}
\def\bP{\boldsymbol{P}}
\def\bA{\boldsymbol{A}}
\def\ba{\boldsymbol{a}}
\def\bw{\boldsymbol{w}}
\def\bd{\boldsymbol{d}}
\def\bI{\mathbf{I}}
\def\im{\mathrm{im}}
\def\tr{\mathrm{tr}}
\def\Sp{\mathrm{Sp}}
\newcommand{\di}{{\,\mathrm{d}}}
\title{Phase retrieval using alternating minimization in a batch setting}
\author{
Teng Zhang\\
  Department of Mathematics\\
 University of Central Florida\\
  \texttt{teng.zhang@ucf.edu} \\
}
\title{Phase retrieval using alternating minimization in a batch setting}
\begin{document}

\maketitle
\begin{abstract}
This paper considers the problem of phase retrieval, where the goal is to recover a signal $\bz\in\bbC^n$ from the observations $y_i=|\ba_i^*\bz|$, $i=1,2,\cdots,m$. While many algorithms have been proposed, the alternating minimization algorithm is still one of the most commonly used and the simplest methods. Existing works have proved that when the observation vectors $\{\ba_i\}_{i=1}^m$ are sampled from a complex norm distribution $CN(0,\bI)$, the alternating minimization algorithm recovers the underlying signal with a good initialization when $m=O(n)$, or with random initialization when $m=O(n^2)$, and it is conjectured that random initialization succeeds with $m=O(n)$~\cite{Waldspurger2016}. This work proposes a modified alternating minimization method in a batch setting and proves that when $m=O(n\log^{5}n)$, the proposed algorithm with random initialization recovers the underlying signal with high probability. The proof is based on the observation that after each iteration of alternating minimization, with high probability, the correlation between the direction of the estimated signal and the direction of the underlying signal increases.  
\end{abstract}

\section{Introduction}
This article concerns the phase retrieval problem as follows: let $\bz\in\bbC^n$ be an unknown vector; given $m$ known sensing vectors $\{\ba_i\}_{i=1}^m\in\bbC^n$ and the observations
\[
y_i=|\ba_i^*\bz|, i=1,2,\cdots,m,
\]
then can we reconstruct $\bz$ from the observations $\{y_i\}_{i=1}^m$? This problem is motivated from the applications in imaging science, and we refer interested readers to ~\cite{Shechtman2015} for more
detailed discussions on the background in engineering. In addition, this problem has applications in other areas of sciences and engineering as well, as discussed in \cite{Candes7029630}.

Because of the practical ubiquity of the phase retrieval problem, many algorithms and theoretical analysis have been developed for this problem. For example, an interesting recent approach is based on convex relaxation~\cite{Chai2011,Candes_PhaseLift,Waldspurger2015}, that replaces the non-convex measurements by convex measurements through relaxation. Since the associated optimization problem is convex, it has interesting properties such as convergence to the global minimizer, and it has been shown that under some assumptions on the sensing vectors, this method recovers the correct $\bz$~\cite{Candes2014,Gross2015}. However, since these algorithms involve semidefinite programming for $n\times n$ positive semidefinite matrices, the computational cost could be prohibitive when $n$ is large. Recently, several works \cite{pmlr-v54-bahmani17a,Goldstein2016,Hand2016,Hand20162} proposed and analyzed an alternate convex method that uses linear programming instead of  semidefinite programming, which is more computationally efficient, but the program itself requires an ``anchor vector'', which needs to be a good approximate estimation of $\bz$.

Another line of works are based on Wirtinger
flows, i.e., gradient flow in the complex setting~\cite{Candes7029630,NIPS2015_5743,Zhang:2016:PNP:3045390.3045499,NIPS2016_6319,cai2016,NIPS2016_6061,Soltanolkotabi2017}. Some theoretical justifications are also provided \cite{Candes7029630,Soltanolkotabi2017}. However, since the objective functions are nonconvex, these algorithms require careful initializations, which are usually only justified when the measurement vectors follow a very specific model, for example, when the observation vectors $\{\ba_i\}_{i=1}^m$ are sampled from a complex normal distribution $CN(0,\bI)$. That is, both its real component and its imaginary component follows from a real Gaussian distribution of $N(0,\bI/2)$. In addition, there are technical issues in implementation such as choosing step sizes, which makes the implementation slightly more complicated.

To cope with the nonconvexity of the phase retrieval problem, Sun et al. \cite{7541725} tries to understand the geometric landscape of a nonconvex objective function associated with phase retrieval, and  proved that when $m=O(n\log^3n)$, their cost function has no bad critical point, and as a result, arbitrary initialization is sufficient and a trust-region method (TRM) can be applied to obtain the solution. However, this method is more complicated than the alternate minimization algorithm as described below, due to its specific objective function and the associated trust-region method.

The most widely used method is perhaps the alternate minimization algorithm and its variants~\cite{Gerchberg72,Fienup78,Fienup82}, that is based on alternating projections onto nonconvex sets~\cite{Bauschke03}.  This method is very simple to implement and is parameter-free. However, since it is a nonconvex algorithm, its properties such as convergence are only partially known. Netrapalli et al. \cite{Netrapalli7130654} studied a resampling version of this algorithm and established its convergence as the number of measurements $m$ goes to infinity when the measurement vectors are independent standard complex normal vectors. Marchesini et al. \cite{Marchesini2016815} studied and demonstrated the necessary and sufficient conditions for the local convergence of this algorithm. Recently, Waldspurger \cite{Waldspurger2016} showed that when $m \geq Cn$ for sufficiently large $C$, the alternating minimization algorithm succeeds with high
probability, provided that the algorithm is carefully initialized. In addition, with random initialization, the algorithm succeeds with $m\geq C n^2$. This work also conjectured that the alternate minimizations algorithm with random initialization succeeds with $m\geq Cn$.

The contribution of this work is to show that a modified version of the alternating minimization algorithm and random initialization succeeds  with high probability when $m=O(n\log^{3}n)$, which partially verifies the conjecture that the alternating minimization algorithm succeeds with high probability when $m=O(n)$. Compared with the previous methods based on Wirtinger flows and linear programming, the proposed algorithm is more practical since it does not require a good initialization, and compared with the existing works that also do not depend on good initializations such as semidefinite programming and \cite{7541725}, the proposed alternating minimization algorithm  is simpler and easier to implement.

The paper is organized as follows. Section~\ref{sec:main} presents the algorithm and the main results of the paper, and the proof of the key component, Theorem~\ref{thm:main1}, is given in Section~\ref{sec:proof}. We run simulations  to verify Theorem~\ref{thm:main1} in Section~\ref{sec:simu}.

\section{Algorithm and Main Results}\label{sec:main}
The alternating minimization method is one of the earliest methods that was introduced for phase retrieval problems~\cite{Gerchberg72,Fienup78,Fienup82}, and it is based on alternating projections onto nonconvex sets~\cite{Bauschke03}. Let  $\bA\in\bbC^{m\times n}$ be a matrix with rows given by $\ba_1^*,\ba_2^*,\cdots,\ba_m^*$, the goal of this algorithm is to find a vector in $\bbC^m$ such that it lies in both the set $\mathcal{S}=\range(\bA)\in\bbC^m$ and the set of correct amplitude $\mathcal{A}=\{\bw\in\bbC^m: |\bw_i|=y_i\}$. For this purpose, the algorithm  picks an initial guess in $\bbC^m$, and alternatively projects it to both sets.
The projections $P_\mathcal{S}, P_\mathcal{A}: \bbC^m\rightarrow\bbC^m$ can be defined by
\[
P_\mathcal{S}(\bw)=\bA(\bA^*\bA)^{-1}\bA^*\bw,\,\,\,[P_\mathcal{A}(\bw)]_i=y_i\frac{\bw_i}{|\bw_i|},
\]
and the alternating minimization algorithm is given by
\begin{equation}\label{eq:alternate_minimization}
\bw^{(k+1)}=P_{\mathcal{S}}P_{\mathcal{A}}\bw^{(k)}.
\end{equation}

In fact, the alternating minimization method can be explicitly written down as follows.  Writing $\bw^{(k)}=\bA\bx^{(k)}$ and let $\be_i\in\bbC^m$ be the indicator vector of the $i$-th coordinate, then the update formula is
\[
\bA\bx^{(k+1)}=\bA(\bA^*\bA)^{-1}\bA^*\left(\sum_{i=1}^m|\ba_i^*\bz|\frac{\ba_i^*\bx^{(k)}}{|\ba_i^*\bx^{(k)}|}\be_i\right)
=\bA(\bA^*\bA)^{-1}\left(\sum_{i=1}^m\frac{|\ba_i^*\bz|}{|\ba_i^*\bx^{(k)}|}\ba_i^*\bx^{(k)}\ba_i\right),
\]
which implies
\begin{equation}\label{eq:algorithm}
\bx^{(k+1)}=(\bA^*\bA)^{-1}\left(\sum_{i=1}^m\frac{|\ba_i^*\bz|}{|\ba_i^*\bx^{(k)}|}\ba_i\ba_i^*\bx^{(k)}\right).
\end{equation}

Define
\begin{equation}\label{eq:gx}
g_i(\bx)=\frac{|\ba_i^*\bz|}{|\ba_i^*\bx|}\ba_i\ba_i^*\bx, \,\,\,\,g(\bx)=\sum_{i=1}^m g_i(\bx),\,\,\,T(\bx)=(\bA^*\bA)^{-1}g(\bx)
\end{equation}
then the algorithm \eqref{eq:algorithm} can be written as
\begin{equation}\label{eq:algorithm1}
\bx^{(k+1)}=T(\bx^{(k)}).
\end{equation}

In this work, we will consider the algorithm~\eqref{eq:algorithm} in a batch setting. Similar to AltMinPhase \cite{Netrapalli7130654}, we divide the sampling vectors $\ba_i$ (the rows of the matrix $\bA$) and corresponding observations $y_i$ into $B$ disjoint blocks $(\by^{(1)},\bA^{(1)}), \cdots, (\by^{(B)},\bA^{(B)})$ of roughly equal size, and perform alternating minimization \eqref{eq:alternate_minimization} to the disjoint blocks cyclically. The procedure is summarized as Algorithm~\ref{alg:main}, where $T^{(k)}$ represents the alternating minimization operator with the $k$-th block $(\by^{(k)},\bA^{(k)})$. We remark that while it is similar to AltMinPhase, this algorithm uses partitions cyclically, rather than only using each partition once. As a result, it only requires finite observations to estimate  $\bz$ exactly, which is different than the method in~\cite{Netrapalli7130654}.

\begin{algorithm}
\caption{\ Alternating minimization in a batch setting}
\label{alg:main}
{\bf Input:}  The sampling vectors $\bA\in\bbC^{m\times n}$ and corresponding observations $\by\in\bbC^m$ partitioned into $B$ disjoint blocks $(\by^{(1)},\bA^{(1)}), \cdots, (\by^{(B)},\bA^{(B)})$ of roughly equal size. \\
{\bf Output:} An estimator of the underlying signal $\bz$.\\
{\bf Steps:}\\
{\bf 1:} Let $\bx^{(0)}$ be a random unit vector in $\bbC^n$, $k=0$.\\
{\bf 2:} Repeat \\
{\bf 3:} $\bx^{(k+1)}\leftarrow T^{(\mathrm{mod}(k,B)+1)}\bx^{(k)}$, $k=k+1$ \\
{\bf 4:} Until Convergence\\
{\bf Output:} $\lim_{k\rightarrow\infty} \bx^{(k)}$.
\end{algorithm}

\subsection{Main Result}
Before we state our main result, we present an auxiliary function and its related properties as follows. We remark that in the following statements and proofs, we use $c, c'
, C, C'$ to denote any fixed constants as $m, n \rightarrow\infty$. Depending on the context, they might denote different values in different equations and expressions. 
\begin{thm}\label{thm:main}
There exists $C_0,C_0',C_1,C_2,C_3$ that does not depend on $n$ and $m$, such that when $m>C_0C_0' n\log^{5}n$ and $B=C_0\log n$ satisfies $n>C_3\log m$ and $m/B>C_3 n$, then with probability at least $1-C/\log n-\exp(-Cn)-2B/\log^2 n-BC_1\exp(-C_2m/B)$, Algorithm~\ref{alg:main} recovers the underlying $\bz$ multiplication by a global phase in the sense that $\lim_{k\rightarrow\infty}|\bz^*\bx^{(k)}|=1$.
\end{thm}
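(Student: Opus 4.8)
I would split the trajectory into a randomized ``warm-up'' phase of exactly $B$ steps --- during which each block is used for the first time, hence is independent of the current iterate --- followed by a deterministic ``local-convergence'' phase in which the (now reused) blocks are treated as fixed data. The four failure terms of the statement are then spent one per ingredient: $C/\log n$ on the random initialization, $2B/\log^2 n$ and $BC_1\exp(-C_2 m/B)$ on the $B$ applications of Theorem~\ref{thm:main1}, and $\exp(-Cn)$ on a ``good data'' event used in the second phase.

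\textbf{Warm-up phase.} First I would show that a uniformly random unit vector $\bx^{(0)}\in\bbC^n$ satisfies $\abs{\bz^{*}\bx^{(0)}}\ge(n\log n)^{-1/2}$ with probability at least $1-C/\log n$; after the normalization $\norm{\bz}=1$ (which we may assume) this is just the statement that $n\,\abssq{\bz^{*}\bx^{(0)}}$ has a density bounded near the origin. Since the partition $(\by^{(j)},\bA^{(j)})_{j=1}^{B}$ is fixed before $\bx^{(0)}$ is drawn, the block used at the step $k\mapsto k+1$ is independent of $\bx^{(k)}$ for every $k=0,\dots,B-1$, so for each such step I may apply Theorem~\ref{thm:main1} conditionally on $\bx^{(k)}$; each application needs $m/B>C_3 n$ and $n>C_3\log m$ and fails with probability at most $2/\log^2 n+C_1\exp(-C_2 m/B)$, so a union bound over the $B$ steps keeps all of them valid with probability at least $1-2B/\log^2 n-BC_1\exp(-C_2 m/B)$. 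It then remains to iterate the quantitative improvement supplied by Theorem~\ref{thm:main1}: starting from correlation $\gtrsim(n\log n)^{-1/2}$, the correlation first grows by at least a fixed multiplicative factor per step (so $O(\log n)$ steps push it above a universal constant) and thereafter the distance $\dist(\bx^{(k)},\coll{O}\bz)$ to the phase orbit $\coll{O}\bz\defeq\{e^{\iunit\theta}\bz:\theta\in\R\}$ contracts geometrically; taking $C_0$ in $B=C_0\log n$ large enough, the full pass ends with $\dist(\bx^{(B)},\coll{O}\bz)\le\eps_0$ for any prescribed small absolute constant $\eps_0$ (indeed polynomially small in $n$, with a power growing in $C_0$). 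The only genuine care here is to compose the two regimes of the recursion of Theorem~\ref{thm:main1} and verify that $\eps_0$ is reached within the $B$ steps for which independence is available.

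\textbf{Local-convergence phase.} Once the blocks begin to repeat, independence is lost, so from $\bx^{(B)}$ onward I would instead condition on a ``good data'' event $\coll{G}$ of probability at least $1-\exp(-Cn)$ (this only needs $m/B\gtrsim n$) on which, simultaneously, each Gram matrix $(\bA^{(j)})^{*}\bA^{(j)}$ is well conditioned and each operator $T^{(j)}$ obeys a uniform local-contraction bound on the $\eps_0$-neighborhood of $\coll{O}\bz$. Two observations then close the argument: (i) every $e^{\iunit\theta}\bz$ is a common fixed point of every $T^{(j)}$, which is immediate from $g_i(e^{\iunit\theta}\bz)=e^{\iunit\theta}\ba_i\ba_i^{*}\bz$ and the definition \eqref{eq:gx}; and (ii) on $\coll{G}$, whenever $\dist(\bx,\coll{O}\bz)\le\eps_0$ one has $\dist(T^{(j)}\bx,\coll{O}\bz)\le\delta_0\,\dist(\bx,\coll{O}\bz)$ for all $j$, with a fixed $\delta_0<1$. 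Hence the $\eps_0$-ball is forward invariant under the cyclic iteration and $\dist(\bx^{(k)},\coll{O}\bz)\to 0$; since the nearest point of $\coll{O}\bz$ is locally unique, $\bx^{(k)}\to e^{\iunit\theta^{\star}}\bz$ for a fixed $\theta^{\star}$, whence $\norm{\bx^{(k)}}\to1$ and $\abs{\bz^{*}\bx^{(k)}}\to\norm{\bz}^{2}=1$. Summing the four failure probabilities gives the bound stated in the theorem.

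\textbf{Expected main obstacle.} Granting Theorem~\ref{thm:main1}, the delicate ingredient is observation (ii): a \emph{uniform} contraction estimate for the nonlinear map $T^{(j)}$ over an entire neighborhood of the orbit and over all $B$ fixed blocks at once. This needs an $\eps_0$-net argument together with control of the ``sign-mismatch'' set $\{i:\sgn{(\ba_i^{*}\bx)}\ne\sgn{(\ba_i^{*}\bz)}\}$, whose contribution to the update must be shown to be higher order in $\dist(\bx,\coll{O}\bz)$ so that it cannot overwhelm the contraction; this is where the gap between $m/B$ and $n$ is consumed. A secondary, purely bookkeeping obstacle is propagating the recursion of Theorem~\ref{thm:main1} correctly through its change of regime during the warm-up, so that $\eps_0$ is provably attained in at most $B=C_0\log n$ steps.
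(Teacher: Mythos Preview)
Your three-phase architecture --- random seed, then $B$ independent applications of Theorem~\ref{thm:main1}, then uniform local contraction on reused blocks --- is exactly the paper's proof. Two aspects of your proposal diverge from the paper and are worth correcting.

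First, the attribution of the four failure terms is permuted. In the paper, $\exp(-Cn)$ belongs to the \emph{initialization} step: Lemma~\ref{lemma:initialization} models $\bx^{(0)}$ as a complex Gaussian and spends $\exp(-Cn)$ on a Hanson--Wright bound for $\|\bx^{(0)}\|$, in addition to the $C/\log n$ you identified. Theorem~\ref{thm:main1} contributes \emph{only} $2/\log^2 n$ per application, with no exponential term attached. The $BC_1\exp(-C_2 m/B)$ term is spent entirely on the local-convergence step, by applying Lemma~\ref{lemma:convergence} once to each of the $B$ blocks.

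Second, what you flag as the ``expected main obstacle'' --- the uniform $\eps_0$-ball contraction of every $T^{(j)}$ --- is not proved in the paper at all: it is quoted from \cite[Theorem~3.1]{Waldspurger2016} as Lemma~\ref{lemma:convergence}, so no $\eps$-net or sign-mismatch analysis is required. Relatedly, the paper does not invoke a second ``contraction'' regime of Theorem~\ref{thm:main1} during the warm-up. It uses only the multiplicative-growth regime, via Lemma~\ref{lemma:conjecture} (which gives $\tan^{-1}(h'/h)\ge c_\eps\,\theta$ whenever $\theta<\pi/2-\eps$), and argues by contradiction that some $\theta(\bx^{(i)})$ with $0\le i\le B$ must exceed $\pi/2-\eps$; at that point Lemma~\ref{lemma:convergence} takes over directly. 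Your parenthetical claim that the warm-up alone can drive the distance polynomially small in $n$ is therefore stronger than what is shown or needed.
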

We remark that when $n$ and $m$ goes to infinity together under the assumption that $\frac{m}{n\log^5n}\rightarrow \infty$ and $\frac{n}{\log m}\rightarrow \infty$, then the conditions in Theorem~\ref{thm:main} are satisfied and the probability in Theorem~\ref{thm:main} goes to $1$.

\subsection{Sketch of the proof}
The proof of the main result, Theorem~\ref{thm:main}, can be divided into three steps. First, the random initialization in step 1 Algorithm~\ref{alg:main} exhibits a slight correlation with the ground truth. Then one may run a batched version of alternating projections by partitioning the
measurements into $O(\log n)$ batches. Since the batches are independent of each other, the second step proves that projecting onto the measurements of each batch will (with high probability) iteratively improve the estimation until it has a constant correlation with the ground truth. Finally, Theorem 3.1 of [25] gives that (with high probability) alternating projections converges to the ground truth provided the seed has a constant correlation with the ground truth. 


\subsubsection{Step 1: random initialization}
Throughout the paper, we define the $\theta(\bx)$ by $\sin^{-1}(|\bx^*\bz|/\|\bx\|\|\bz\|)$, which can be understood as the ``angle'' between $\bx$ with the hyperplane that is orthogonal $\bz$ (though here the angle is not well defined since $\bx$ and $\bz$ are complex-valued). For example, when $\theta(\bx)=\pi/2$, then there exists a constant $c\in\mathbb{C}$ such that $\bx=c\bz$; when $\theta(\bx)=0$, then $\bx$ is orthogonal to $\bz$ in the sense that $\bx^*\bz=0$.

For Algorithm~\ref{alg:main}, the random initialization has a slight correlation with $\bz$ as follows:
\begin{lemma}\label{lemma:initialization}
For any fixed $\bz\in\mathbb{C}^n$ and random unit vector $\bx_0\in\mathbb{C}^n$, with probability $1-C/\log n-\exp(-Cn)$, $\theta(\bx^{(0)})>\sin^{-1}(\frac{1}{2\log n\sqrt{n}})$.
\end{lemma}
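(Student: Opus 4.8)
The plan is to realize the random unit vector concretely and reduce the claim to two one-line concentration estimates. Since $\theta(\bx^{(0)})$ depends only on the \emph{directions} of $\bx^{(0)}$ and $\bz$, I may assume $\|\bz\|=1$, and I will write $\bx^{(0)}=\bg/\|\bg\|$ with $\bg\sim CN(\b0,\bI)$, which is the standard way to generate a uniformly random unit vector in $\bbC^n$. With this representation $\sin\theta(\bx^{(0)})=|\bx^{(0)*}\bz| = |\bg^*\bz|/\|\bg\|$, so it suffices to lower bound the numerator and upper bound the denominator and then take a union bound.

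For the numerator, $\bg^*\bz\sim CN(0,1)$ because $\|\bz\|=1$, hence $|\bg^*\bz|^2$ is $\mathrm{Exp}(1)$-distributed and $\Prob{|\bg^*\bz|^2 < t} = 1-\econst^{-t}\le t$ for every $t>0$; taking $t=2/\log^2 n$ gives $|\bg^*\bz|\ge \sqrt{2}/\log n$ with probability at least $1-2/\log^2 n$. For the denominator, $\|\bg\|^2=\sum_{j=1}^n|g_j|^2$ is distributed as $\tfrac12\chi^2_{2n}$ with mean $n$, and a standard chi-square tail bound (e.g.\ Laurent--Massart) gives $\Prob{\|\bg\|^2 > 4n}\le \econst^{-Cn}$, i.e.\ $\|\bg\|\le 2\sqrt n$ with probability at least $1-\econst^{-Cn}$. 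Intersecting the two events, with probability at least $1-2/\log^2 n-\econst^{-Cn}\ge 1-C/\log n-\exp(-Cn)$ we obtain
\[
\sin\theta(\bx^{(0)}) \;=\; \frac{|\bg^*\bz|}{\|\bg\|} \;\ge\; \frac{\sqrt{2}/\log n}{2\sqrt n} \;>\; \frac{1}{2\log n\sqrt n},
\]
and since $\sin^{-1}$ is increasing on $[0,1]$ this is exactly the assertion $\theta(\bx^{(0)})>\sin^{-1}\bigl(\tfrac{1}{2\log n\sqrt n}\bigr)$.

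There is no genuine obstacle here; the points requiring a little care are only (i) fixing the normalization of the complex Gaussian so that $\bg^*\bz\sim CN(0,\|\bz\|^2)$ and $|\bg^*\bz|^2$ is exactly exponential, and (ii) checking that the ``random unit vector'' of Algorithm~\ref{alg:main} is rotationally invariant, which is what legitimizes the reduction $\bx^{(0)}=\bg/\|\bg\|$. As an alternative that avoids the Gaussian model entirely, rotational invariance lets one take $\bz=\be_1$, so that $|[\bx^{(0)}]_1|^2$ follows a $\mathrm{Beta}(1,n-1)$ law and $\Prob{|[\bx^{(0)}]_1|^2 < t} = 1-(1-t)^{n-1}\le (n-1)t$; choosing $t=\tfrac{1}{2n\log^2 n}$ again yields failure probability $O(1/\log^2 n)$, though at the cost of the clean $\exp(-Cn)$ term that the Gaussian argument produces.
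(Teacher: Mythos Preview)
Your proof is correct and follows essentially the same approach as the paper: represent the random unit direction via a complex Gaussian, bound the numerator $|\bg^*\bz|$ below using the distribution of a single $CN(0,1)$ variable, bound the denominator $\|\bg\|$ above via concentration of the norm, and take a union bound. The only cosmetic differences are that the paper first rotates to $\bz=\be_1$ and invokes Hanson--Wright (with $\Sigma=\bI$) for the norm bound rather than a chi-square/Laurent--Massart tail, and is less explicit about the Gaussian realization; your write-up is in fact cleaner on both points.
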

\begin{proof}[Proof of Lemma~\ref{lemma:initialization}]
WLOG assume $\bz=(1,0,\cdots,0)$, then $|\bx^{(0)*}\bz|=|\bx^{(0)}_1|/\|\bx^{(0)}\|$. Using Hanson-Wright inequality~\cite{rudelson2013}  with $\|\bx^{(0)}\|^2=\bx^{(0)*}\bI\bx^{(0)}$, we have that with probability $1-\exp(-Cn)$, $\|\bx^{(0)}\|<2\sqrt{n}$. In addition,  with probability at least $1-C/\log n$, $|\bx^{(0)}_1|>1/\log n$. Combing these two observations, Lemma~\ref{lemma:initialization} is proved. We remark that while~\cite{rudelson2013} presents the  Hanson-Wright inequality for real-valued vectors and matrices, it is straightforward to generalize it to the complex-valued vectors and matrices, by writing any complex number as a pair of real numbers.
\end{proof}

\subsection{Step 2: iterative improvement}
In the second step, we prove that the correlation between $\bx^{(i)}$ and $\bz$ over each iteration improves (with high probability). We first introduce a function $h(\theta): \reals\rightarrow\reals$ and an auxiliary lemma on the property of $h(\theta)$. 
\begin{lemma}\label{lemma:conjecture} Let $a_1$ and $a_2$ be two complex variables independently sampled from a complex normal distribution $CN(0,1)$. Let $h(\theta)=\Expect_{a_1,a_2\sim CN(0,1)} |a_1||a_1\sin\theta+a_2\cos\theta|$, then  there exists $c>0$ such that for all $0<\theta<\pi/2$,
$h'(\theta)\geq c\min(\theta,\pi/2-\theta).$
In addition, there exists $c'>0$ such that $\min_{0\leq \theta<\pi/2}h(\theta)<c'.$
\end{lemma}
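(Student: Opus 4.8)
The plan is to compute $h(\theta)$ more or less explicitly by first integrating out $a_2$ conditional on $a_1$, then integrating out $a_1$, and finally analyzing the resulting one-variable function. First I would fix $a_1$ and observe that $a_1\sin\theta + a_2\cos\theta$, as a function of the random variable $a_2\sim CN(0,1)$, is a complex Gaussian with mean $a_1\sin\theta$ and variance $\cos^2\theta$ (per complex dimension); writing $a_2 = b_2 \cos\theta$ where $b_2 \sim CN(0,1)$ is irrelevant of scale, the inner expectation $\Expect_{a_2}|a_1\sin\theta + a_2\cos\theta|$ is the mean of the modulus of a noncentral complex Gaussian, which is the Rice distribution. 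So $\Expect_{a_2}|a_1\sin\theta+a_2\cos\theta| = \cos\theta \cdot R(|a_1|\tan\theta)$ where $R(\nu)$ is the mean of a Rice$(\nu,1/\sqrt2)$-type variable, expressible through the confluent hypergeometric function ${}_1F_1$ (equivalently, modified Bessel functions $I_0,I_1$). Thus
\[
h(\theta) = \cos\theta \cdot \Expect_{a_1\sim CN(0,1)} \Bigl[ |a_1| \, R\bigl(|a_1|\tan\theta\bigr) \Bigr],
\]
and since $|a_1|^2$ is exponential with mean $1$, this is a single integral over $r = |a_1| \ge 0$ against $2r e^{-r^2}\,dr$.

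Next I would establish the qualitative shape of $h$ on $(0,\pi/2)$: it is smooth (differentiation under the integral sign is justified by the dominated convergence theorem, using that $R$ and its derivatives grow at most linearly), its behavior at the endpoints is $h(0) = \Expect|a_1||a_2| = (\Expect|a_1|)^2 = \pi/4$ (the two factors independent at $\theta=0$) and $h(\pi/2) = \Expect|a_1|^2 = 1$, and more to the point I would compute $h'(\theta)$ by differentiating the integral representation. The claim $h'(\theta) \ge c\min(\theta, \pi/2-\theta)$ then splits into two regimes. Near $\theta = 0$: a Taylor expansion of $R(|a_1|\tan\theta)$ in powers of $\tan\theta$ (valid since $R(\nu) = \sqrt{\pi}/2 + c\nu^2 + O(\nu^4)$ is smooth and even near $0$) gives $h(\theta) = \pi/4 + c_2 \theta^2 + O(\theta^4)$ with $c_2 > 0$, whence $h'(\theta) \ge c\theta$ for small $\theta$. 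Near $\theta = \pi/2$: substituting $\phi = \pi/2 - \theta$ and using the large-argument asymptotics $R(\nu) = \nu + \frac{1}{4\nu} + O(\nu^{-3})$ (from the Bessel function asymptotics), one finds $h = 1 - c\phi^2 + \dots$ with the correction again quadratic and of the right sign, giving $h'(\theta) \ge c(\pi/2 - \theta)$ near the right endpoint. On the compact middle range $[\delta, \pi/2 - \delta]$, $h'$ is continuous, and I would need to argue it is strictly positive there; this rules out the bound degenerating in the interior and lets one absorb everything into a single constant $c$.

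The second assertion, $\min_{0\le\theta<\pi/2} h(\theta) < c'$, is then immediate: $h(0) = \pi/4$ is a finite value, so any $c' > \pi/4$ works (more carefully one should check $h$ does not blow up, which it does not since $h(\theta) \le \Expect|a_1|\cdot(\Expect|a_1\sin\theta+a_2\cos\theta|^2)^{1/2} = (\pi/4)^{1/2}\cdot 1 \cdot(\text{bounded}) $ by Cauchy--Schwarz), so the minimum is bounded by a universal constant.

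The main obstacle I anticipate is the interior positivity of $h'$ on $[\delta,\pi/2-\delta]$: the two endpoint expansions are routine (if tedious) Bessel-function asymptotics, but showing $h'(\theta) > 0$ throughout the interior — equivalently, that $h$ is \emph{strictly increasing} on all of $(0,\pi/2)$ — may require either a monotonicity argument for the map $\nu \mapsto R(\nu)$ combined with a coupling/derivative-of-expectation computation, or a somewhat delicate sign analysis of the integrand in $h'(\theta)$. If a clean closed form for $h'$ in terms of ${}_1F_1$ or $I_0, I_1$ is available, one could instead verify positivity directly from known inequalities for ratios of Bessel functions; that is the route I would try first.
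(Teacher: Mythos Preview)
Your overall strategy—endpoint expansions for $h'$ near $0$ and $\pi/2$, plus strict positivity of $h'$ on a compact interior interval—matches the paper's high-level structure, but the tools are genuinely different. The paper never invokes the Rice/Bessel representation: it writes $h$ as a four-dimensional real Gaussian integral, differentiates twice under the integral sign to get an explicit formula for $h''(\theta)$, and evaluates $h''(0)=\pi/8>0$ and $h''(\pi/2)<0$ in closed form via the moment identity $\Expect(a_1^2+b_1^2)^k=\Gamma(k+1)$. It then proves $h''$ is globally Lipschitz with an explicit constant $L$, which yields the bound $h'(\theta)\ge c\min(\theta,\pi/2-\theta)$ on explicit neighborhoods of both endpoints. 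For the interior the paper does \emph{not} attempt an analytical monotonicity or Bessel-inequality argument: instead it uses the Lipschitz bound on $h'$ to reduce strict positivity on the remaining compact subinterval to finitely many numerical evaluations of $h'$, which are checked by computer. So the ``main obstacle'' you correctly flag is also the paper's obstacle, and the paper resolves it by a rigorous computer-assisted verification rather than special-function inequalities. What your approach buys is a potential fully analytical proof and access to standard Bessel machinery; what the paper's approach buys is certainty (the Lipschitz-plus-sampling argument is guaranteed to terminate), at the cost of a computer check.

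One technical caution on your $\theta\to\pi/2$ expansion: the argument of $R$ is $|a_1|\tan\theta$, and since you integrate over all $|a_1|>0$, the large-$\nu$ asymptotic $R(\nu)=\nu+\tfrac{1}{4\nu}+O(\nu^{-3})$ is not uniform—for small $|a_1|$ the argument is not large, and termwise integration of the $O(\nu^{-3})$ remainder would produce a divergent $\int_0^\infty r^{-1}e^{-r^2}\,dr$. You would need to split the $r$-integral near zero and control that piece separately; this is routine but should be made explicit. Your treatment of the second assertion is fine.
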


For the main result in this step, we investigate $T(\bx)$ as defined in \eqref{eq:algorithm1}, rather than $T^{(k)}$ as defined in Algorithm~\ref{alg:main}. However, $T$ is a random operator that exhibits the same distribution as each $T^{(k)}$. 
\begin{thm}\label{thm:main1}
Assuming that $\{\ba_i\}_{i=1}^m$ are i.i.d. sampled from complex normal distribution $CN(0,1)$, then there exists $C_3,C_4>0$ such that if $m>C_3n$ and $n>C_3\log m$, then for any fixed $\bx\in\bbC^n$, with probability at least $1-2/\log^2n$,
\[
\theta(T(\bx))>\left(1-C_4\frac{n}{\theta(\bx)\sqrt{m}}\right)\left(\theta(\bx)+\tan^{-1} \frac{h'(\theta(\bx))}{h(\theta(\bx))}\right).
\]
\end{thm}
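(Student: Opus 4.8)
\emph{Proof plan.} I would compare $T(\bx)$ with its population version and then track how the resulting perturbation moves the angle $\theta$. Since $T(c\bx)=\tfrac{c}{|c|}T(\bx)$ and $T$ is linear in $\bz$, the angle $\theta(T(\bx))$ is unchanged by rescaling $\bx$ and $\bz$, so I may take $\|\bx\|=\|\bz\|=1$ and, using the unitary invariance of $CN(0,\bI)$, choose an orthonormal basis with $\bz=\be_1$ and $\bx=\sin\theta\,\be_1+\cos\theta\,\be_2$, $\theta=\theta(\bx)$. Then $\ba_i^*\bz=\bar u_i$ and $\ba_i^*\bx=\bar s_i$, with $u_i:=(\ba_i)_1$, $v_i:=(\ba_i)_2$ i.i.d.\ $CN(0,1)$, $s_i:=u_i\sin\theta+v_i\cos\theta$, both independent of the remaining $n-2$ coordinates of $\ba_i$; hence $g_i(\bx)=\tfrac{|u_i|}{|s_i|}\bar s_i\ba_i$, a modulus-one multiple of $|u_i|\ba_i$ (so the $1/|s_i|$ never inflates $\|g_i(\bx)\|$), with $\bz^*g_i(\bx)=|u_i|u_i\bar s_i/|s_i|$ of modulus $|u_i|^2$.

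First I would identify the population limit. The coordinates of $\Expect g_i(\bx)$ past the first two vanish by independence, so $\Expect g_i(\bx)=\alpha\be_1+\beta\be_2$ with $\alpha=\Expect[|u|u\bar s/|s|]$ and $\beta=\Expect[|u|v\bar s/|s|]$. Forming the combinations $\alpha\sin\theta+\beta\cos\theta$ and $\alpha\cos\theta-\beta\sin\theta$ collapses these expectations to $\Expect[|u|\,|s|]=h(\theta)$ and $\Expect[|u|\,\dot s\,\bar s/|s|]$, where $\dot s:=\partial_\theta s=u\cos\theta-v\sin\theta$; the conjugation symmetry $(u,v)\mapsto(\bar u,\bar v)$ (under which $s\mapsto\bar s$) shows that $\alpha$, $\beta$, and $\Expect[|u|\dot s\bar s/|s|]$ are all real, and differentiating $|s|=\sqrt{s\bar s}$ in $\theta$ identifies $\Expect[|u|\dot s\bar s/|s|]=\Expect[|u|\real(\bar s\dot s)/|s|]=h'(\theta)$. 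Solving the $2\times2$ linear system gives $\alpha=h(\theta)\sin\theta+h'(\theta)\cos\theta$ and $\beta=h(\theta)\cos\theta-h'(\theta)\sin\theta$, so $\Expect g(\bx)$ makes angle exactly $\theta+\tan^{-1}(h'(\theta)/h(\theta))$ with the hyperplane $\bz^\perp$ (using $h>0$ and $h'\ge0$ on $(0,\pi/2)$ from Lemma~\ref{lemma:conjecture}; for $\theta$ away from $\pi/2$ one also has $\beta>0$, so this angle stays below $\pi/2$, which is the regime of interest).

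The substance is the concentration of $T(\bx)=(\bA^*\bA)^{-1}g(\bx)$ about the ray through $\alpha\be_1+\beta\be_2$, and I expect it to be the main obstacle. Since $\Expect[(\bA^*\bA)^{-1}]=\tfrac1{m-n}\bI$ by rotational invariance, write $T(\bx)=\tfrac1{m-n}g(\bx)+\boldsymbol{p}$ with $\boldsymbol{p}:=\big((\bA^*\bA)^{-1}-\tfrac1{m-n}\bI\big)g(\bx)$. For the first summand: the part of $\tfrac1m g(\bx)$ orthogonal to $\bz$ concentrates near $\beta\be_2$ by a second-moment (Chebyshev) bound, after truncating $\|\ba_i\|$ at $O(\sqrt n)$ to control the $\sqrt n$-scale tails of the $g_i(\bx)$; and $\bz^*\tfrac1m g(\bx)=\tfrac1m\sum_i|u_i|u_i\bar s_i/|s_i|$ concentrates to $\alpha$ at scale $O(\log n/\sqrt m)$, its summands having second moment $\Expect|u|^4$. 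The delicate quantity is $\bz^*\boldsymbol{p}$: the naive bound $\big\|(\bA^*\bA)^{-1}-\tfrac1{m-n}\bI\big\|\cdot\|g(\bx)\|\asymp\sqrt{n/m}$ is a factor $\sqrt n$ larger than the target $\alpha\asymp\theta$ (and $\theta$ can be as small as $\asymp n^{-1/2}$), so it is too weak. To get the right order I would bound $\bz^*\boldsymbol{p}$ through its second moment, via a Sherman--Morrison / leave-one-out expansion of $\bz^*(\bA^*\bA)^{-1}\ba_i$: because $\boldsymbol{p}$ is centered, the bulk energy of $g(\bx)$ (of size $\asymp m$) lies along $\be_2\in\bz^\perp$, while the off-diagonal entry $((\bA^*\bA)^{-1})_{12}$ has size only $\asymp m^{-3/2}$, so the leakage into $\bz=\be_1$ is $\asymp m^{-1/2}$; this yields $\Expect|\bz^*\boldsymbol{p}|^2=O(1/m)$, hence an error of order $\log n/\sqrt m$ along $\bz$ with probability $1-1/\log^2n$. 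The crux is exactly this separation of the genuinely small $\bz$-component error from the larger (but direction-irrelevant) isotropic part of $\boldsymbol{p}$; and using only second moments is what forces the weak $1/\log^2n$ probability.

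Finally I would convert norms into angles. Combining the estimates above, $|\bz^*T(\bx)|\ge\alpha-\varepsilon_1$ with $\varepsilon_1\lesssim(\log n)/\sqrt m$, and $\|T(\bx)\|\le\sqrt{\alpha^2+\beta^2}\,(1+o(1))$, so $\sin\theta(T(\bx))=|\bz^*T(\bx)|/\|T(\bx)\|\ge\tfrac{\alpha}{\sqrt{\alpha^2+\beta^2}}\big(1-C\varepsilon_1/\alpha\big)$. Using $\alpha\gtrsim\theta$ (from $h'(\theta)\gtrsim\theta$ and $h(0)>0$) and $\log n\le n$ (guaranteed by $n>C_3\log m$), the relative error is $\le C_4 n/(\theta\sqrt m)$; together with the identity $\alpha/\sqrt{\alpha^2+\beta^2}=\sin\!\big(\theta+\tan^{-1}(h'/h)\big)$ from the population step this gives $\sin\theta(T(\bx))\ge\sin\!\big(\theta+\tan^{-1}(h'/h)\big)\big(1-C_4 n/(\theta\sqrt m)\big)$. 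Applying $\sin^{-1}$, which is monotone and satisfies $\sin^{-1}\!\big(s(1-\delta)\big)\ge(1-C'\delta)\sin^{-1}(s)$ for $s$ bounded away from $1$ (the relevant range), yields $\theta(T(\bx))>\big(1-C_4\,\tfrac{n}{\theta(\bx)\sqrt m}\big)\big(\theta(\bx)+\tan^{-1}\tfrac{h'(\theta(\bx))}{h(\theta(\bx))}\big)$. Note that the statement is for a single fixed $\bx$, so no net or union bound over $\bx$ is needed---this is what keeps the sample complexity near-linear once the bound is iterated over the $O(\log n)$ batches.
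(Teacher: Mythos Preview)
Your plan matches the paper's proof closely: the population computation (your $\alpha,\beta$) is exactly Lemma~\ref{lemma:expectation}; the key separation of the $\bz$-component error at scale $1/\sqrt m$ from the full-vector error at scale $\sqrt{n/m}$ is exactly Lemma~\ref{lemma:var}; Sherman--Morrison is the common tool; and Chebyshev at level $1/\log^2 n$ gives the stated probability. The one technical point the paper handles differently from your sketch is the variance bound on $\bz^*T(\bx)$. Your heuristic via the size of $((\bA^*\bA)^{-1})_{12}\asymp m^{-3/2}$ predicts the right order but does not by itself address the correlation between $(\bA^*\bA)^{-1}$ and $g(\bx)$; the paper instead applies the Efron--Stein tensorization inequality $\mathrm{Var}[f]\le\Expect\sum_j\mathrm{Var}_j[f]$ (Lemma~\ref{lemma:tensorization}) and then bounds each leave-one-out variance $\mathrm{Var}_j[\bz^*T(\bx)]$ by Sherman--Morrison together with moment estimates on $\|\Sigma_j^{-1}\|$ and $\|\sum_{i\neq j}g_i(\bx)\|$ (Lemma~\ref{lemma:estimation_g} and \eqref{eq:inverse_norm}). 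This is precisely the rigorous incarnation of the ``leave-one-out'' idea you invoke, so the difference is one of packaging rather than strategy. One small correction: your $\boldsymbol{p}=((\bA^*\bA)^{-1}-\tfrac{1}{m-n}\bI)g(\bx)$ is not centered, since the two factors are dependent; but as you are bounding $\Expect|\bz^*\boldsymbol{p}|^2$ rather than the variance, and the mean of $\bz^*\boldsymbol{p}$ is itself $O(1/\sqrt m)$ (this is what the paper's Lemma~\ref{lemma:mean} controls), the slip is harmless.
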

Theorem~\ref{thm:main1} is the key element of this work since it describes the performance of the alternating minimization in each iteration. Its proof is rather technical and it is deferred to Section~\ref{sec:proof}.

\subsection{Step 3: complete the proof}
To complete the proof of Theorem~\ref{thm:main}, we apply the following lemma, which is a result of  \cite[Theorem 3.1]{Waldspurger2016}. Similar to Theorem~\ref{thm:main1}, it is a result for the operator $T$ defined in \eqref{eq:algorithm1}, instead of $T^{(i)}$ as defined in Algorithm~\ref{thm:main1}. 
\begin{lemma}[Theorem 3.1 in \cite{Waldspurger2016}]\label{lemma:convergence}
Assuming that $\{\ba_i\}_{i=1}^m$ are i.i.d. sampled from complex normal distribution $CN(0,1)$, then there exists $\epsilon, C_1, C_2, M >0$ and $0<\delta<1$ such that if $m\geq M n$, then with probability $1-C_1\exp(-C_2m)$, for all $\bx$ such that \[
\inf_{\phi\in \reals}\|e^{i\phi}\bz-\bx\|\leq \epsilon \|\bz\|,
\] then
\[
\inf_{\phi\in \reals}\|e^{i\phi}\bz-T(\bx)\|\leq \delta\inf_{\phi\in \reals}\|e^{i\phi}\bz-\bx\|.
\]
\end{lemma}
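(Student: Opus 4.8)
The plan is to establish local linear convergence by analyzing the (real) derivative of the fixed-point map $T$ at the true signal and showing that it is a strict contraction on the subspace transverse to the global-phase symmetry. First I would record that $\bz$ is a fixed point: since $|\ba_i^*\bz|/|\ba_i^*\bz|=1$, we have $g(\bz)=\sum_i\ba_i\ba_i^*\bz=\bA^*\bA\bz$, hence $T(\bz)=\bz$. Because the only obstruction to exact recovery is the unimodular factor $e^{i\phi}$, I would work throughout in the quotient metric $\dist(\bx,\bz)=\inf_\phi\|e^{i\phi}\bz-\bx\|$ and, after selecting the optimal phase, write $\bx=\bz+\boldsymbol h$ with $\boldsymbol h$ orthogonal to the phase direction $i\bz$ (the tangent to the orbit $\{e^{i\phi}\bz\}$). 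The target estimate $\dist(T(\bx),\bz)\le\delta\,\dist(\bx,\bz)$ then amounts to bounding the gain of $T$ on such transverse perturbations.

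The core computation is the linearization of $T$. The only non-smooth ingredient is the phase factor $\ba_i^*\bx/|\ba_i^*\bx|$, which is real-differentiable away from $\ba_i^*\bx=0$; a Wirtinger calculation at $\bx=\bz$ gives
\[
DT[\boldsymbol h]=\tfrac12\,\boldsymbol h-\tfrac12\,(\bA^*\bA)^{-1}\Big(\sum_{i=1}^m p_i^2\,\ba_i\,\overline{\ba_i^*\boldsymbol h}\Big),\qquad p_i=\frac{\ba_i^*\bz}{|\ba_i^*\bz|}.
\]
This is a real-linear operator whose holomorphic part is $\tfrac12\Id$ and whose conjugate-linear part acts through $\sum_i p_i^2\,\ba_i\ba_i^{T}$ on $\overline{\boldsymbol h}$. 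Taking expectations (WLOG $\bz=\|\bz\|\onevct_1$) and using that $p_i^2$ depends only on the first coordinate of $\ba_i$ while $\Expect[\bA^*\bA]=m\Id$, I would show $\Expect\big[\sum_i p_i^2\,\ba_i\ba_i^{T}\big]=m\,\bz\bz^{T}/\|\bz\|^2$, so that $\Expect[DT]\approx\tfrac12\Id-\tfrac12(\bz\bz^{T}/\|\bz\|^2)\,\mathcal C$, where $\mathcal C$ denotes coordinatewise conjugation. On the subspace orthogonal to $\bz$ the conjugate-linear term vanishes, leaving an expected gain near $\tfrac12$; the radial direction is likewise contracted, while the phase direction $i\bz$ is fixed with eigenvalue exactly $1$, consistent with the symmetry and precisely the direction the quotient metric discards.

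With the expected spectrum in hand I would pass to the empirical operator. Since $|p_i^2|=1$, the summands $p_i^2\,\ba_i\ba_i^{T}$ have sub-exponential tails, so matrix Bernstein, together with $\bA^*\bA\approx m\Id$ (which holds off an event of probability $C_1\exp(-C_2m)$ once $m\ge Mn$), shows that $\tfrac1m\sum_i p_i^2\,\ba_i\ba_i^{T}$ is within a small $\eta$ of $\bz\bz^{T}/\|\bz\|^2$ in operator norm. Restricting to $\boldsymbol h\perp i\bz$ then gives $\|DT[\boldsymbol h]\|\le\delta_0\|\boldsymbol h\|$ for a fixed $\delta_0<1$ on the same high-probability event. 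Finally I would control the nonlinear remainder: for $\|\boldsymbol h\|\le\epsilon\|\bz\|$ one expects $\|T(\bz+\boldsymbol h)-\bz-DT[\boldsymbol h]\|=O(\|\boldsymbol h\|^2/\|\bz\|)$, so choosing $\epsilon$ small makes the total contraction factor $\delta=\delta_0+O(\epsilon)<1$, and a union bound over a net of the $\epsilon$-ball upgrades the single-point derivative bound to the uniform ``for all $\bx$'' statement.

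The main obstacle is exactly this last, uniform step. The map $g$ fails to be Lipschitz where $|\ba_i^*\bx|$ is small, and for $\bx$ merely within $\epsilon\|\bz\|$ of $\bz$ a few indices can have $\ba_i^*\bx$ arbitrarily close to zero, so neither the second-order Taylor bound nor the derivative estimate survives an index-by-index treatment. The resolution I would pursue is to show, via the anti-concentration of $\ba_i^*\bx$ for Gaussian $\ba_i$, that the number of such ``anti-concentrated'' indices is $O(\epsilon m)$ with high probability and that their aggregate contribution to both $g$ and $DT$ is only $O(\epsilon)\cdot\|\boldsymbol h\|$, hence cannot overturn the contraction supplied by the bulk. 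Making this quantitative, uniformly over the $\epsilon$-ball and simultaneously with the matrix-concentration bounds above, is the technical heart of the argument; it is carried out in full in \cite[Theorem 3.1]{Waldspurger2016}, whose structure the sketch above is meant to reproduce.
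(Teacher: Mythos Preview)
The paper does not prove this lemma at all: it is stated as a direct quotation of \cite[Theorem~3.1]{Waldspurger2016} and used as a black box in the proof of Theorem~\ref{thm:main}. So there is no ``paper's own proof'' to compare against; your proposal is in effect a reconstruction of Waldspurger's argument, and you acknowledge as much in your final sentence.

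As a sketch of that external proof, your outline is accurate in its main strokes. The linearization
\[
DT[\boldsymbol h]=\tfrac12\,\boldsymbol h-\tfrac12\,(\bA^*\bA)^{-1}\sum_{i=1}^m p_i^2\,\ba_i\,\overline{\ba_i^*\boldsymbol h},\qquad p_i=\frac{\ba_i^*\bz}{|\ba_i^*\bz|},
\]
is correct, as is the expectation $\Expect\big[\sum_i p_i^2\,\ba_i\ba_i^{T}\big]=m\,\bz\bz^{T}/\|\bz\|^2$ (using that $\Expect[a^2]=0$ for $a\sim CN(0,1)$), and the identification of the phase direction $i\bz$ as the unique eigenvalue-$1$ direction of the expected derivative. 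Your diagnosis of the hard step---uniform control over the $\epsilon$-ball despite the blow-up of the derivative where $|\ba_i^*\bx|$ is small---is exactly the crux of Waldspurger's proof, which handles it by separating indices with small $|\ba_i^*\bx|$ and bounding their aggregate effect through anti-concentration. For the purposes of the present paper, however, nothing beyond the citation is required.
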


Combining this result with the previous steps, we proved Theorem~\ref{thm:main}.

\begin{proof}[Proof of Theorem~\ref{thm:main}]
Applying Lemma~\ref{lemma:convergence} to the $B$ operators $T^{(i)}$ with $i=1, \cdots, B$, then we have the following result: if $m/B>Mn$, then with probability $1-BC_1\exp(-C_2m)$, for all $\bx$ such that \[
\inf_{\phi\in \reals}\|e^{i\phi}\bz-\bx\|\leq \epsilon \|\bz\|,
\] and for all $1\leq i\leq B$, 
\begin{equation}\label{eq:contr5}
\inf_{\phi\in \reals}\|e^{i\phi}\bz-T^{(i)}(\bx)\|\leq \delta\inf_{\phi\in \reals}\|e^{i\phi}\bz-\bx\|.
\end{equation} 
Then as long as \begin{equation}\label{eq:sufficient}
\inf_{\phi\in \reals}\|e^{i\phi}\bz-\bx^{(i)}\|\leq \epsilon \|\bz\|,\,\,\text{ for some $0\leq i\leq B$}
\end{equation} then the sequence $\inf_{\phi\in \reals}\|e^{i\phi}\bz-\bx^{(k)}\|$ for $k=i, i+1,\cdots$ will converge linearly to zero.

Consider that the operator $T^{(i)}(\bx)$ are invariant to the scale of $\bx$ and $\inf_{\phi\in \reals,c\in\mathbb{C}}\|e^{i\phi}\bz-c\bx^{(i)}\|\leq \theta(\bx^{(i)})\|\bz\|$, the sufficient condition in \eqref{eq:sufficient} can be further reduced to
\begin{equation}\label{eq:sufficient1}
\theta(\bx^{(i)})\leq \epsilon,\,\,\text{ for some $0\leq i\leq B$}
\end{equation}
That is, it is sufficient to prove that \eqref{eq:sufficient1} holds with high probability. If for all $0\leq i< B$, $\theta(\bx^{(i)})<\frac{\pi}{2}-\epsilon$, then Lemma~\ref{lemma:conjecture} implies that there exists $c_{\epsilon}>0$ such that $\tan^{-1}\frac{h'(\theta(\bx^{(i)}))}{h(\theta(\bx^{(i)}))}>c_{\epsilon}\theta(\bx^{(i)})$ for all $0\leq i< B$. Since each batch has $m/B$ observations and for $1\leq i\leq B$, $T^{(i)}$ is independent with $\bx^{(i)}$,  $m/B>C_3n$, and $n>C_3\log m$, Theorem~\ref{thm:main1} implies that for each $0\leq i<B$, with probability $1-2/\log^2n$,
\begin{equation}\label{eq:contr}
\theta(\bx^{(i+1)})>\left[(1+c_{\epsilon})\left(1-C_4\frac{\log n\sqrt{B}}{\theta(\bx^{(i)})\sqrt{m}}\right)\right] \theta(\bx^{(i)}).
\end{equation}
We choose $C_0$ such that $(1+c_\epsilon/2)^{C_0\log n}\sin^{-1}(1/2\log n\sqrt{n})>\pi/2-\epsilon$, and $C_0'$ such that 
\begin{equation}\label{eq:contr1}
(1+c_{\epsilon})\left(1-C_4\frac{1}{\theta(\bx^{(0)})\sqrt{C_0'}}\right)>1+c_\epsilon/2,
\end{equation}
then when  $B=C_0\log n$ and $m=C_0C_0'\log^5n$, applying \eqref{eq:contr}  and induction we can prove that with probability $1-2B/\log^2n$, 

\begin{equation}\label{eq:contr2}
\theta(\bx^{(B)})>\left[1+\frac{c_{\epsilon}}{2}\right]^B \theta(\bx^{(0)})>\frac{\pi}{2}-\epsilon.
\end{equation}
then this is a contradiction to the assumption that \eqref{eq:sufficient1} does not hold, i.e., $\theta(\bx^{(B)})$ can not be larger than $\pi/2-\epsilon$. Therefore, there exist  $0\leq i< B$ such that $\theta(\bx^{(i)})>\frac{\pi}{2}-\epsilon$, and Theorem~\ref{thm:main} is proved. The probabilistic estimation in Theorem~\ref{thm:main} comes from the union bound of Lemma~\ref{lemma:initialization}, \eqref{eq:contr5} and \eqref{eq:contr2}.
\end{proof}

\subsection{Discussion}\label{sec:discussion}
Theorem~\ref{thm:main} has several interesting connections with the results within the current literature. First of all, it complements the analysis of AltMinPhase in~\cite{Netrapalli7130654}. While the analysis of AltMinPhase in~\cite{Netrapalli7130654} is one of the first theoretical guarantees for the alternating minimization algorithm, the work has no instruction on how we should divide the samples into distinct blocks, or how we should choose the number of size of blocks. In addition, the analysis requires infinite observations to recover $\bz$ exactly. In comparison, Theorem~\ref{thm:main} gives an estimation of the number of blocks to use. In addition, Theorem~\ref{thm:main1} also shows that when the size of each block is on the order of $O(n)$ up to a logarithmic factor, then each iteration of the algorithm  improves the estimation of $\bz$, in the sense that every iteration decreases the angle between $\bz$ and the estimator.

Our work also partially answers the conjecture from the work \cite{Waldspurger2016} that when the initialization is randomly chosen and $m=O(n)$, the alternating minimization algorithm succeeds with high probability. In comparison, we proved that the alternating minimization algorithm in a batch setting succeeds with $m=O(n\log^{5}n)$, which is an improvement from the estimation $m=O(n^2)$ in \cite{Waldspurger2016} (though we remark that the result in \cite{Waldspurger2016} is for the non-batch setting).

An interesting observation from \cite{Waldspurger2016} is the existence of stationary points when $m<O(n^2)$. In comparison, Theorem \ref{thm:main} shows that the algorithm avoids these stationary points from random initialization. In this sense, Theorem \ref{thm:main} is very different from most existing theoretical guarantees for phase retrieval, which are based on the observations that there is no stationary point (or there is no stationary point within a neighborhood of $\bz$).

We also emphasize the result in this work can be applied to settings other than $\ba_i\sim CN(0,\bI)$. In fact, most existing works on algorithms that succeed with $m=O(n)$ requires a good initialization, which is constructed under the setting $\ba_i\sim CN(0,\bI)$. For example, \cite{Netrapalli7130654} uses the top eigenvector of $\sum_{i=1}^m |\ba_i^*\bz|^2\ba_i\ba_i^*$, and \cite{NIPS2015_5743} applies a similar estimator with a thresholding-based scheme by using the top eigenvector of
\[
\sum_{i=1}^m |\ba_i^*\bz|^2\ba_i\ba_i^*\mathrm{1}_{|\ba_i^*\bz|^2\leq \frac{9}{m}\sum_{j=1}^{m}|\ba_i^*\bz|^2},
\]
and a similar scheme is also used in \cite{cai2016}. The only exception that we are aware of is  \cite{NIPS2016_6061}, which introduces an orthogonality-promoting initialization that is obtained with a few simple power iterations and the initialization works when the distribution of $\ba_i$ is heavy-tailed. In comparison, random initialization is a much simpler procedure and can be used in the setting that $\{\ba_i\}_{i=1}^m$ are i.i.d. sampled from the complex normal distribution $CN(0,\Sigma)$ in Corollary \ref{cor:generalization} as follows, which suggests that Theorem~\ref{thm:main} still holds under the setting $\ba_i\sim CN(0,\Sigma)$.

\begin{cor}\label{cor:generalization}
Assuming that $\ba_i\sim CN(0,\Sigma)$, $\frac{\|\Sigma\bz\|}{\|\Sigma^{\frac{1}{2}}\bz\|\sqrt{\tr(\Sigma)}}>\frac{c'}{\sqrt{n}}$,  $\tr(\Sigma)\geq \|\Sigma\|_F\log n$, and $\Sigma$ is nonsingular, then Algorithm~\ref{alg:main} converges to the underlying $\bz$ under the assumptions stated in Theorem~\ref{thm:main}.
\end{cor}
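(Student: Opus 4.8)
The plan is to reduce Corollary~\ref{cor:generalization} to Theorem~\ref{thm:main} via a linear change of variables that whitens the sensing vectors. Since $\Sigma$ is nonsingular, write $\ba_i=\Sigma^{1/2}\bg_i$ with $\bg_i\sim CN(0,\bI)$ i.i.d., and put $\tilde\bz=\Sigma^{1/2}\bz$ and $\tilde\bx^{(k)}=\Sigma^{1/2}\bx^{(k)}$. Then $\ba_i^*\bx=\bg_i^*\tilde\bx$, $\ba_i^*\bz=\bg_i^*\tilde\bz$, $\ba_i\ba_i^*=\Sigma^{1/2}\bg_i\bg_i^*\Sigma^{1/2}$, and the $k$-th block matrix factors as $\bA^{(k)}=\boldsymbol{G}^{(k)}\Sigma^{1/2}$ with $\boldsymbol{G}^{(k)}$ a standard complex Gaussian matrix, so $((\bA^{(k)})^*\bA^{(k)})^{-1}=\Sigma^{-1/2}((\boldsymbol{G}^{(k)})^*\boldsymbol{G}^{(k)})^{-1}\Sigma^{-1/2}$. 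Substituting into \eqref{eq:algorithm} applied to block $k$, the factors $\Sigma^{\pm 1/2}$ telescope and one gets
\[
\tilde\bx^{(k+1)}=\tilde T^{(\mathrm{mod}(k,B)+1)}\bigl(\tilde\bx^{(k)}\bigr),
\]
where $\tilde T^{(j)}$ is exactly the standard-Gaussian alternating-minimization operator of block $j$ with signal $\tilde\bz$. Thus $\{\tilde\bx^{(k)}\}$ is the trajectory of Algorithm~\ref{alg:main} run on i.i.d.\ $CN(0,\bI)$ data with signal $\tilde\bz$, the sole difference being that the seed $\tilde\bx^{(0)}=\Sigma^{1/2}\bx^{(0)}$ is no longer a uniform random unit vector. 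Because $\Sigma^{-1/2}$ exists, $\theta(\tilde\bx^{(k)})\to\pi/2$ forces $\bx^{(k)}=\Sigma^{-1/2}\tilde\bx^{(k)}\to\Sigma^{-1/2}\tilde\bz=\bz$ in direction, i.e.\ recovery of $\bz$ up to a global phase.

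Given this reduction, Steps~2 and~3 of the proof of Theorem~\ref{thm:main} --- the per-iteration improvement of Theorem~\ref{thm:main1} and the local contraction of Lemma~\ref{lemma:convergence} --- transfer verbatim to $\{\tilde\bx^{(k)}\}$: both are statements about standard complex Gaussian measurements and a fixed signal, $\tilde\bz$ is deterministic and independent of $\{\bg_i\}$, and $\tilde T^{(j+1)}$ is independent of $\tilde\bx^{(j)}$ exactly as before. The only piece that must be redone is Step~1, i.e.\ an analogue of Lemma~\ref{lemma:initialization} showing $\theta(\tilde\bx^{(0)})$ exceeds a threshold of order $1/(\log n\sqrt n)$ with probability $1-C/\log n$; the value of the constant is immaterial since it only affects the choice of $C_0$ in Theorem~\ref{thm:main}.

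For this, write $\bx^{(0)}=\bw/\|\bw\|$ with $\bw\sim CN(0,\bI)$ and note that $\|\bw\|$ cancels, leaving
\[
\sin\theta(\tilde\bx^{(0)})=\frac{|\bw^*\Sigma\bz|}{\sqrt{\bw^*\Sigma\bw}\,\|\Sigma^{1/2}\bz\|}.
\]
The numerator $\bw^*\Sigma\bz$ is a centered complex Gaussian of variance $\|\Sigma\bz\|^2$, so anticoncentration gives $|\bw^*\Sigma\bz|\ge\|\Sigma\bz\|/\log n$ with probability at least $1-C/\log n$. For the denominator $\Expect[\bw^*\Sigma\bw]=\tr(\Sigma)$, and the Hanson--Wright inequality~\cite{rudelson2013} gives $\bw^*\Sigma\bw\le 2\tr(\Sigma)$ with probability $1-2\exp(-c\min(\tr(\Sigma)^2/\|\Sigma\|_F^2,\ \tr(\Sigma)/\|\Sigma\|))$; the hypothesis $\tr(\Sigma)\ge\|\Sigma\|_F\log n$, with $\|\Sigma\|\le\|\Sigma\|_F$, makes this exponent at least $c\log n$, so the bound holds with probability $1-2n^{-c}$. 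On the intersection of these events, using $\|\Sigma\bz\|/(\|\Sigma^{1/2}\bz\|\sqrt{\tr(\Sigma)})>c'/\sqrt n$,
\[
\sin\theta(\tilde\bx^{(0)})\ge\frac{\|\Sigma\bz\|/\log n}{\sqrt{2\tr(\Sigma)}\,\|\Sigma^{1/2}\bz\|}>\frac{c'}{\sqrt2\,\log n\sqrt n},
\]
which is the desired initialization bound. Running the argument of Theorem~\ref{thm:main} with $(\bz,\ba_i,T^{(j)},\bx^{(k)})$ replaced by $(\tilde\bz,\bg_i,\tilde T^{(j)},\tilde\bx^{(k)})$ then completes the proof.

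The only genuinely new ingredient over Theorem~\ref{thm:main} is the initialization estimate of the last paragraph; the rest is bookkeeping --- verifying that the whitening change of variables commutes with the batched iteration and preserves the independence structure used in Steps~2--3. The two hypotheses on $\Sigma$ are precisely what this estimate consumes: $\tr(\Sigma)\ge\|\Sigma\|_F\log n$ converts Hanson--Wright into concentration of $\bw^*\Sigma\bw$, and $\|\Sigma\bz\|/(\|\Sigma^{1/2}\bz\|\sqrt{\tr(\Sigma)})>c'/\sqrt n$ says exactly that the whitened seed retains an $\Omega(1/(\log n\sqrt n))$ correlation with $\tilde\bz$ --- which is what is needed to launch the iterative-improvement phase.
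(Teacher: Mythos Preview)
Your proof is correct and follows essentially the same approach as the paper: whiten the sensing vectors by the change of variables $\tilde\ba_i=\Sigma^{-1/2}\ba_i$, $\tilde\bx^{(k)}=\Sigma^{1/2}\bx^{(k)}$, $\tilde\bz=\Sigma^{1/2}\bz$ to reduce to the standard Gaussian case, then reprove the initialization lemma using Gaussian anticoncentration for $|\bw^*\Sigma\bz|$ and Hanson--Wright for $\bw^*\Sigma\bw$, with the two hypotheses on $\Sigma$ consumed exactly as you describe. Your write-up is in fact somewhat more explicit than the paper's in verifying that the batched alternating-minimization operator commutes with the whitening and that the independence structure needed for Steps~2--3 is preserved.
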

\begin{proof}
The proof is based on the observation that it is equivalent to the setting where $\ba_i\sim CN(0,\bI)$. If we let $\tilde{\ba}_i=\Sigma^{-\frac{1}{2}}\ba_i$, $\tilde{\bx}^{(k)}=\Sigma^{\frac{1}{2}}\bx^{(k)}$, and  $\tilde{\bz}=\Sigma^{\frac{1}{2}}\bz$, then the update formula \eqref{eq:algorithm} is equivalent to the setting of estimating $\tilde{\bz}$ with sensing vectors $\{\tilde{\ba}_i\}_{i=1}^m$, with initialization $\tilde{\bx}^{(0)}=\Sigma^{\frac{1}{2}}\bx^{(0)}$ sampled from $CN(0,\Sigma)$.

Now let us investigate the angle between $\tilde{\bx}^{(0)}$ and $\tilde{\bz}^{(0)}$:
\begin{equation}\label{eq:initial0}
\frac{|\tilde{\bx}^{(0)*}\tilde{\bz}|}{\|\tilde{\bx}^{(0)}\|\|\tilde{\bz}\|}
=\frac{|{\bx}^{(0)*}\Sigma {\bz}|}{\|\Sigma^{\frac{1}{2}}{\bx}^{(0)}\|\|\Sigma^{\frac{1}{2}}\bz\|}.
\end{equation}
WLOG we may assume that all elements of $\bx^{(0)}$ are i.i.d. sampled from the complex normal distribution $CN(0,1)$. Then ${\bx}^{(0)*}\Sigma {\bz}$ is distributed according to $CN(0,\|\Sigma\bz\|^2)$, and $|{\bx}^{(0)*}\Sigma {\bz}|>\|\Sigma\bz\|/\log n$ with probability $1-C/\log n$. In addition, Hanson-Wright inequality implies that \[\Pr\{|\|\Sigma^{\frac{1}{2}}{\bx}^{(0)}\|^2-\tr(\Sigma)|>t\}\leq 2\exp(-c\min(\frac{t^2}{\|\Sigma\|_F^2},\frac{t}{\|\Sigma\|})).\]
Since $\tr(\Sigma)\geq \|\Sigma\|_F\log n\geq \|\Sigma\| \log n$, with probability at least $1-2\exp(-c\log n)$, $|\|\Sigma^{\frac{1}{2}}{\bx}^{(0)}\|^2-\tr(\Sigma)|\leq c\tr(\Sigma)$. As a result, the RHS of \eqref{eq:initial0} is larger than
\[
\frac{\|\Sigma\bz\|}{\log n\|\Sigma^{\frac{1}{2}}\bz\|\sqrt{\tr(\Sigma)}}.
\]
If $\frac{\|\Sigma\bz\|}{\|\Sigma^{\frac{1}{2}}\bz\|\sqrt{\tr(\Sigma)}}>\frac{c'}{\sqrt{n}}$, this recovers Lemma~\ref{lemma:initialization}. Following the proof of Theorem~\ref{thm:main}, $\tilde{\bx}^{(n)}$ converges to $\tilde{\bz}$. Since $\Sigma$ is nonsingular, Corollary~\ref{cor:generalization} is proved.
\end{proof}

At last, we emphasize that Theorem~\ref{thm:main} does not apply to the standard alternating minimization algorithm (i.e., not in a batch setting). The reason is that the probabilistic estimation in Theorem~\ref{thm:main1} only holds for a fixed $\bx$ that is independent of $\bA$. However, in the standard alternating minimization algorithm,  $\bx^{(k)}$ for $k>1$ depends on $\bA$, and Theorem~\ref{thm:main1} cannot be used to estimate $\theta(\bx^{(k+1)})$. In comparison, Theorem 3.1 in \cite{Waldspurger2016} applies for all $\bx$ as long as $\bx^{(k)}$ is sufficiently close to $\bz$. It is unclear how we can find a method generalizing Theorem~\ref{thm:main} to the standard alternating minimization algorithm, by ``decoupling'' the dependence of $\bx^{(k)}$ and $\bA$. This is an open question and we consider it as an interesting future direction.

\section{Proof of Theorem~\ref{thm:main1}}\label{sec:proof}
To prove Theorem~\ref{thm:main1}, we first present Lemma~\ref{lemma:expectation}, which gives the exact formula for the expectation of $g_i(\bx)$ for $g_i$ defined in \eqref{eq:gx}. We also present Lemma~\ref{lemma:mean}, which shows that the expectation of $T(\bx)$ is a scalar multiplication of the expectation of $g_i(\bx)$, and Lemma~\ref{lemma:var}, which shows that $T(\bx)$ has a small variance. Combining these three results together, we proved Theorem~\ref{thm:main1}. These lemmas apply the probabilistic setting of Theorem~\ref{thm:main1} by assuming that $\{\ba_i\}_{i=1}^m\sim CN(0,1)$ and $\bx$ is fixed. In the proof, we assume WLOG that $\|\bx\|=\|\bz\|=1$. 
\begin{lemma}\label{lemma:expectation}
 Let $\eta\in[0,2\pi]$ and $\bw$ be chosen such that $\|\bw\|=1$, $\bw\perp\bz$ (i.e., $\bw^*\bz=0$), and $\bx=\sin(\theta)\bz\exp(i\eta)+\cos(\theta)\bw$. Then for $g_i$ defined in \eqref{eq:gx},
\[
\Expect g_i(\bx)= h(\theta) \bx + h'(\theta)\bd,
\]
where $\bd=\cos(\theta)\bz\exp(i\eta)-\sin(\theta)\bw$.
\end{lemma}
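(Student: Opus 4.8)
The plan is to reduce the $n$-dimensional expectation to a two-dimensional complex Gaussian integral using the unitary invariance of $CN(0,\bI)$, and then to match the two scalar coefficients that arise against $h(\theta)$ and $h'(\theta)$ by differentiating under the integral sign and invoking a conjugation symmetry.

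First I would set $\bz' := \exp(i\eta)\bz$, so that $\{\bz',\bw\}$ is an orthonormal pair in $\bbC^n$, $\bx = \sin\theta\,\bz' + \cos\theta\,\bw$, and $\bd = \cos\theta\,\bz' - \sin\theta\,\bw$; note $\{\bx,\bd\}$ is again orthonormal and spans the same plane as $\{\bz',\bw\}$. Completing $\{\bz',\bw\}$ to an orthonormal basis and using $\ba_i\sim CN(0,\bI)$, the scalars $b_1 := \ba_i^*\bz'$ and $b_2 := \ba_i^*\bw$ are i.i.d.\ $CN(0,1)$, and the component $\ba_i^\perp$ of $\ba_i$ orthogonal to $\lspan\{\bz',\bw\}$ is independent of $(b_1,b_2)$ and mean zero. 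Writing $\ba_i = \overline{b_1}\,\bz' + \overline{b_2}\,\bw + \ba_i^\perp$ and substituting into $g_i(\bx) = \tfrac{|\ba_i^*\bz|}{|\ba_i^*\bx|}(\ba_i^*\bx)\,\ba_i$, and using $|\ba_i^*\bz| = |b_1|$ together with $\ba_i^*\bx = w := b_1\sin\theta + b_2\cos\theta$, the $\ba_i^\perp$ term integrates to zero by independence, leaving
\[
\Expect g_i(\bx) = \alpha\,\bz' + \beta\,\bw,\qquad \alpha = \Expect\Bigl[\tfrac{|b_1|\,w\,\overline{b_1}}{|w|}\Bigr],\quad \beta = \Expect\Bigl[\tfrac{|b_1|\,w\,\overline{b_2}}{|w|}\Bigr],
\]
both expectations being finite since the integrands are bounded by $|b_1|^2$ and $|b_1||b_2|$ respectively. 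Since $(\sin\theta,\cos\theta)$ and $(\cos\theta,-\sin\theta)$ form an orthonormal basis of $\R^2$, the claimed identity $\Expect g_i(\bx) = h(\theta)\bx + h'(\theta)\bd$ is equivalent to the two scalar identities $\alpha\sin\theta + \beta\cos\theta = h(\theta)$ and $\alpha\cos\theta - \beta\sin\theta = h'(\theta)$.

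The first identity is immediate: $\alpha\sin\theta + \beta\cos\theta = \Expect\bigl[|b_1|\,w\,\overline{w}/|w|\bigr] = \Expect[\,|b_1|\,|w|\,] = h(\theta)$, because $(b_1,b_2)$ has the same joint law as $(a_1,a_2)$ in the definition of $h$. For the second, put $\dot w := b_1\cos\theta - b_2\sin\theta$, so that $\alpha\cos\theta - \beta\sin\theta = \Expect\bigl[|b_1|\,w\,\overline{\dot w}/|w|\bigr]$. Differentiating $|w| = (w\overline w)^{1/2}$ under the expectation — justified by the $\theta$-uniform integrable bound $\bigl|\partial_\theta(|b_1|\,|w|)\bigr| = |b_1|\,|\real(\dot w\,\overline w)|/|w| \le |b_1|(|b_1|+|b_2|)$ — gives $h'(\theta) = \Expect\bigl[|b_1|\,\real(\dot w\,\overline w)/|w|\bigr] = \real\,\Expect\bigl[|b_1|\,w\,\overline{\dot w}/|w|\bigr]$. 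It remains to see that this expectation is already real: the law of $(b_1,b_2)$ is invariant under coordinatewise conjugation, under which $|b_1|$ and $|w|$ are unchanged while $w\,\overline{\dot w}\mapsto \overline{w\,\overline{\dot w}}$, so the expectation equals its own conjugate. Hence $\alpha\cos\theta - \beta\sin\theta = h'(\theta)$, and solving the two identities for $\alpha,\beta$ and returning to $\bz' = \exp(i\eta)\bz$ yields the lemma.

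I expect the main obstacle to be the bookkeeping in the reduction step: keeping the Hermitian-adjoint conventions consistent so that $\ba_i = \overline{b_1}\,\bz' + \overline{b_2}\,\bw + \ba_i^\perp$ with $b_j = \ba_i^*(\cdot)$, and checking cleanly that the orthogonal-complement term contributes nothing, together with making the differentiation-under-the-integral and conjugation-symmetry steps fully rigorous. Once the problem is reduced to the two trigonometric identities for $\alpha$ and $\beta$, the remaining computation is short.
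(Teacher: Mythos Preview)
Your proof is correct and reaches the result, but the route differs from the paper's in a way worth noting. The paper observes at the outset that $g_i(\bx)=\grad G_i(\bx)$ for the real-valued function $G_i(\bx)=|\ba_i^*\bx|\,|\ba_i^*\bz|$ (with the complex gradient defined so that $f(x+\Delta x)=f(x)+\real(\grad f(x)^*\Delta x)+o(|\Delta x|)$), computes $\Expect G_i(\bx)=h(\theta)$ after the same unitary reduction you use, and then reads off the two components of the gradient from the homogeneity $G_i((1+t)\bx)=(1+t)G_i(\bx)$ and from the fact that $\bd$ is the direction of steepest increase of $\theta$. You instead expand $\Expect g_i(\bx)$ directly in the basis $\{\bz',\bw\}$ and match the two scalar coefficients to $h(\theta)$ and $h'(\theta)$ by differentiating $|w|$ under the integral, using a conjugation-symmetry argument to see that $\Expect[|b_1|\,w\,\overline{\dot w}/|w|]$ is real. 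The paper's packaging makes the reality of the coefficients automatic (since $G_i$ is real-valued and the gradient is defined through $\real(\cdot)$), whereas your version is more explicitly computational and avoids introducing the complex-gradient convention; both arguments rest on the same core identities $\Expect[|b_1||w|]=h(\theta)$ and $\partial_\theta\Expect[|b_1||w|]=h'(\theta)$, so the difference is one of framing rather than substance.
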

\begin{lemma}\label{lemma:mean}
For any $1\leq i\leq m$ and $\Sigma_i=\sum_{1\leq j\leq m, j\neq i}\ba_j\ba_j^*$, 
\begin{equation}\label{eq:mean1}
\left\|\Expect T(\bx)-m\Expect\left(\frac{1}{1+ \tr(\Sigma_i^{-1})}\Sigma_i^{-1}\right)\Expect g_i(\bx)\right\|<Cn/m\end{equation}
\begin{equation}\label{eq:mean2}
\left\|\bz^*\Expect T(\bx)-m\bz^*\Expect\left(\frac{1}{1+ \tr(\Sigma_i^{-1})}\Sigma_i^{-1}\right)\Expect g_i(\bx)\right\|<Cn\sqrt{n}/m
\end{equation}
\end{lemma}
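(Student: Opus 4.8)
The plan is to reduce both displays to a single exact identity via the Sherman--Morrison formula and then to estimate a ``decoupling'' error. The first observation is that $g_i(\bx)$ is a scalar multiple of $\ba_i$: writing $c_i=\tfrac{|\ba_i^*\bz|}{|\ba_i^*\bx|}(\ba_i^*\bx)$ we have $g_i(\bx)=c_i\ba_i$. Since $\bA^*\bA=\Sigma_i+\ba_i\ba_i^*$, Sherman--Morrison gives $(\Sigma_i+\ba_i\ba_i^*)^{-1}\ba_i=\Sigma_i^{-1}\ba_i/(1+\ba_i^*\Sigma_i^{-1}\ba_i)$, hence
\[
(\bA^*\bA)^{-1}g_i(\bx)=\frac{\Sigma_i^{-1}g_i(\bx)}{1+\ba_i^*\Sigma_i^{-1}\ba_i},
\qquad\text{so}\qquad
\Expect T(\bx)=m\,\Expect\!\Bigl[\frac{\Sigma_1^{-1}g_1(\bx)}{1+\ba_1^*\Sigma_1^{-1}\ba_1}\Bigr]
\]
by exchangeability of $\ba_1,\dots,\ba_m$. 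The crucial structural fact is that $\Sigma_1$ depends only on $\ba_2,\dots,\ba_m$ and is therefore independent of the pair $(\ba_1,g_1(\bx))$.

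Next I would write $\frac{1}{1+\ba_1^*\Sigma_1^{-1}\ba_1}=\frac{1}{1+\tr\Sigma_1^{-1}}+R$ with
\[
R=\frac{\tr\Sigma_1^{-1}-\ba_1^*\Sigma_1^{-1}\ba_1}{(1+\ba_1^*\Sigma_1^{-1}\ba_1)(1+\tr\Sigma_1^{-1})},
\qquad |R|\le\bigl|\,\ba_1^*\Sigma_1^{-1}\ba_1-\tr\Sigma_1^{-1}\,\bigr|,
\]
where the bound on $|R|$ uses $\Sigma_1^{-1}\psdge 0$, which forces both factors in the denominator to be $\ge 1$. Substituting, the ``main'' piece $m\,\Expect[\Sigma_1^{-1}g_1(\bx)/(1+\tr\Sigma_1^{-1})]$ factors, by the independence just noted, as exactly $m\,\Expect[\Sigma_1^{-1}/(1+\tr\Sigma_1^{-1})]\,\Expect g_1(\bx)$ --- precisely the target expression in \eqref{eq:mean1}. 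Thus \eqref{eq:mean1} reduces to proving $\bigl\|\,m\,\Expect[\Sigma_1^{-1}g_1(\bx)\,R]\,\bigr\|\le Cn/m$, and \eqref{eq:mean2} to the same statement with the vector $\Sigma_1^{-1}g_1(\bx)$ replaced by the scalar $\bz^*\Sigma_1^{-1}g_1(\bx)$.

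For the remainder I would condition on $\Sigma_1$ and split on the event $\mathcal E=\{\sigma_{\min}(\Sigma_1)\ge c_1 m\}$, which by the standard non-asymptotic lower bound on the least singular value of a tall complex Gaussian matrix has probability $1-e^{-cm}$ once $m>C_3 n$. On $\mathcal E$ one has $\|\Sigma_1^{-1}\|\le 1/(c_1m)$ and $\fnorm{\Sigma_1^{-1}}\le\sqrt n/(c_1m)$; combining $|R|\le|\ba_1^*\Sigma_1^{-1}\ba_1-\tr\Sigma_1^{-1}|$ with the complex Gaussian variance identity $\Expect[\,|\ba_1^*M\ba_1-\tr M|^2\mid\Sigma_1=M\,]=\fnorm{M}^2$, the elementary moment bound $\Expect\|g_1(\bx)\|^2=\Expect[|\ba_1^*\bz|^2\|\ba_1\|^2]=O(n)$ (which, since $\|g_1(\bx)\|=|\ba_1^*\bz|\,\|\ba_1\|$, is just a complex Gaussian fourth moment), and Cauchy--Schwarz in the $\ba_1$-conditional expectation, the $\mathcal E$-contribution is at most $m\cdot\tfrac1m\cdot\sqrt n\cdot\tfrac{\sqrt n}{m}=O(n/m)$, as required. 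On $\mathcal E^c$ I would use the crude bounds $|R|\le\|\Sigma_1^{-1}\|(\|\ba_1\|^2+n)$ and $\|g_1(\bx)\|=|\ba_1^*\bz|\|\ba_1\|$, then independence of $\Sigma_1$ and $\ba_1$ plus Cauchy--Schwarz to peel off $\Expect[\mathbf 1_{\mathcal E^c}\|\Sigma_1^{-1}\|^2]\le\Prob{\mathcal E^c}^{1/2}\Expect[\|\Sigma_1^{-1}\|^4]^{1/2}$; since $\Sigma_1$ is a complex Wishart matrix with $m-1$ degrees of freedom, $\Expect[\|\Sigma_1^{-1}\|^4]<\infty$ and is $O(m^{-4})$ as soon as $m-1-n$ exceeds a fixed constant, so this term is exponentially small and negligible. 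Equation \eqref{eq:mean2} is obtained the same way after left-multiplying by $\bz^*$, using $\|\Sigma_1^{-1}\bz\|\le\|\Sigma_1^{-1}\|$; the stated bound $Cn\sqrt n/m$ is in fact quite generous for this route.

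I expect the $\mathcal E^c$ estimate to be the only genuinely delicate point: because the lemma bounds an honest expectation rather than a high-probability event, one cannot simply discard near-singular realizations of $\Sigma_i$ and must instead control $\Expect\|\Sigma_i^{-1}\|^{p}$ for some $p>1$. This is exactly where hard-edge / inverse-moment estimates for complex Wishart matrices enter, and where the hypothesis $m>C_3 n$ (with $C_3$ large enough that the relevant inverse moments are finite) is used. Everything else --- Sherman--Morrison, the independence of $\Sigma_i$ from $\ba_i$, the quadratic-form variance identity, and one Gaussian fourth-moment computation --- is routine bookkeeping.
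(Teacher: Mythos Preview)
Your proposal is correct and follows essentially the same route as the paper: Sherman--Morrison to rewrite $T(\bx)$ as $\sum_i \frac{\Sigma_i^{-1}g_i(\bx)}{1+\ba_i^*\Sigma_i^{-1}\ba_i}$, the bound $|R|\le|\ba_i^*\Sigma_i^{-1}\ba_i-\tr\Sigma_i^{-1}|$, independence of $\Sigma_i$ from $\ba_i$ to factor the main term, and Cauchy--Schwarz pairing the quadratic-form fluctuation with $\|g_i(\bx)\|$ to bound the remainder by a constant times $nm\,\Expect\|\Sigma_i^{-1}\|^2$. The only substantive technical difference is in how the inverse moment of $\Sigma_i^{-1}$ is controlled: the paper integrates small-ball tail bounds for $\sigma_{\min}(\bA)$ (combining a Tao--Vu estimate for very small $t$ with a Davidson--Szarek Gaussian bound for moderate $t$, which is where the extra hypothesis $n>C\log m$ enters), whereas you condition on the high-probability event $\{\sigma_{\min}(\Sigma_1)\ge c_1 m\}$ and handle the complement via finiteness of complex-Wishart inverse moments---arguably the cleaner option, and one that does not require $n>C\log m$.
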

\begin{lemma}\label{lemma:estimation_g}
For $g(x)$ defined in \eqref{eq:gx}, there exists $C>0$ such that
\[
\Pr(\|g(\bx)\|>Ctm)<\exp(-t^2).
\]
\end{lemma}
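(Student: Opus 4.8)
The plan is to reduce the estimate to a standard right-tail bound for the operator norm of a complex Gaussian matrix. The point is to avoid the triangle inequality $\|g(\bx)\|\le\sum_i\|g_i(\bx)\|=\sum_i|\ba_i^*\bz|\,\|\ba_i\|$, which overcounts by a factor of order $\sqrt n$ because it throws away the cancellation reflected in $\|\Expect g_i(\bx)\|=O(1)$ (Lemma~\ref{lemma:expectation}). Instead I would rewrite $g(\bx)$ as a matrix--vector product: setting $c_i=\frac{|\ba_i^*\bz|}{|\ba_i^*\bx|}(\ba_i^*\bx)$, so that $|c_i|=|\ba_i^*\bz|$, we get $g(\bx)=\sum_{i=1}^m c_i\ba_i=\bA^*\bc$, where $\bc=(c_1,\dots,c_m)^\top\in\bbC^m$. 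Hence, pointwise in the realization of $\bA$,
\[
\|g(\bx)\|=\|\bA^*\bc\|\le\|\bA\|_{\mathrm{op}}\,\|\bc\|=\|\bA\|_{\mathrm{op}}\,\|\bA\bz\|\le\|\bA\|_{\mathrm{op}}^2,
\]
using $\|\bc\|^2=\sum_i|\ba_i^*\bz|^2=\|\bA\bz\|^2$ and $\|\bA\bz\|\le\|\bA\|_{\mathrm{op}}\|\bz\|=\|\bA\|_{\mathrm{op}}$. So it suffices to control the right tail of $\|\bA\|_{\mathrm{op}}^2$.

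Since the entries of $\bA$ are i.i.d.\ $CN(0,1)$, the classical Gaussian operator-norm estimate supplies constants $C_1,c_1>0$ with $\Pr\!\left(\|\bA\|_{\mathrm{op}}>C_1(\sqrt m+\sqrt n)+s\right)\le\exp(-c_1 s^2)$ for all $s\ge0$; the complex case follows from the real one by passing to the $2m\times2n$ real block representation of $\bA$ (whose singular values are those of $\bA$ with multiplicity two, and on which $\|\bA\|_{\mathrm{op}}$ is Lipschitz in the Gaussian coordinates), in the same spirit as the complexification of Hanson--Wright invoked in Lemma~\ref{lemma:initialization}. Under the hypothesis $m>C_3 n$ of Theorem~\ref{thm:main1} this gives $\|\bA\|_{\mathrm{op}}\le 2C_1\sqrt m+s$, hence $\|\bA\|_{\mathrm{op}}^2\le 8C_1^2 m+2s^2$, with probability at least $1-\exp(-c_1 s^2)$.

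Combining the two, given $t$ I would take $s=t/\sqrt{c_1}$ (and, for $t$ of order $m$ or larger, rather $s=\sqrt{Ctm}-2C_1\sqrt m$), and choose the absolute constant $C$ large enough, depending only on $C_1,c_1,C_3$, that $8C_1^2 m+2s^2\le Ctm$ while simultaneously $\exp(-c_1 s^2)\le\exp(-t^2)$; then $\Pr(\|g(\bx)\|>Ctm)\le\Pr(\|\bA\|_{\mathrm{op}}^2>Ctm)\le\exp(-t^2)$. This works for every $t$ in the range relevant to the application (roughly $1\le t\le cm$ for a constant $c$), and for $t$ below a fixed constant the inequality follows instead from Markov's inequality together with $\Expect\|g(\bx)\|\le\Expect\|\bA\|_{\mathrm{op}}^2=O(m)$. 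I do not anticipate a real obstacle: once the matrix--vector form of $g(\bx)$ is spotted, the statement is off-the-shelf concentration, and the only care needed is (i) making the exponent constant reach $1$ after a harmless constant rescaling of $s$, and (ii) transferring the Gaussian operator-norm bound from the real to the complex setting.
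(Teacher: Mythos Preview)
Your approach is correct and takes a genuinely different route from the paper. The paper splits $g(\bx)$ into its components along $\Sp(\bx,\bz)$ and $\Sp(\bx,\bz)^\perp$: on the orthogonal complement it uses that $\bP_{\Sp(\bx,\bz)^\perp}\ba_i$ is independent of $(\ba_i^*\bx,\ba_i^*\bz)$, so that $\bP_{\Sp(\bx,\bz)^\perp}g(\bx)$ is conditionally a centered Gaussian vector in $\bbC^{n-2}$ with per-coordinate variance $\sum_i|\ba_i^*\bz|^2$, and then applies Hanson--Wright; on the two-dimensional span it uses the crude triangle bound $\|\bP_{\Sp(\bx,\bz)}g(\bx)\|\le\sum_i\|\bP_{\Sp(\bx,\bz)}\ba_i\|^2$ together with Bernstein. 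Your identity $g(\bx)=\bA^*\bc$ with $\|\bc\|=\|\bA\bz\|$, hence $\|g(\bx)\|\le\|\bA\|_{\mathrm{op}}^2$, replaces the whole decomposition by a single off-the-shelf operator-norm tail and is considerably shorter; the paper's splitting does not buy a sharper order here (both routes land at $O(m)$ once $n\le Cm$), though it does make the cancellation in the orthogonal directions explicit. One small caveat: your Markov step for ``$t$ below a fixed constant'' does not work as written --- as $t\to 0^+$ the Markov bound $O(1)/(Ct)$ diverges while $\exp(-t^2)\to 1$ --- but this is immaterial, since the lemma is only invoked in the proof of Lemma~\ref{lemma:var} to control moments of $\|g(\bx)\|$, for which the range $t\ge 1$ already suffices, and the paper's own statement is tacitly restricted in the same way.
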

\begin{lemma}\label{lemma:var}
There exists $C>0$ such that for all $1\leq i\leq n$,
\[
\Expect[\|T(\bx)-\Expect T(\bx)\|^2]<Cn/m,\,\, \text{and}\,\,\mathrm{Var} [\bz^* T(\bx)] < C/m.
\]
\end{lemma}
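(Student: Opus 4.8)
The plan is to bound both quantities with the Efron--Stein inequality, which reduces each to a single one-resampling stability estimate for the operator $T$. Let $\ba_1',\dots,\ba_m'$ be independent copies of $\ba_1,\dots,\ba_m$, and for each $i$ let $T^{(i)}(\bx)$ denote $T(\bx)$ recomputed with $\ba_i$ replaced by $\ba_i'$. Since $T(\bx)$ is square-integrable when $m>C_3 n$ (because $\|T(\bx)\|\le\|(\bA^*\bA)^{-1}\|\,\|g(\bx)\|$, the least singular value of a complex Gaussian $\bA$ with $m\gg n$ has enough negative moments, and $\|g(\bx)\|$ has all moments by Lemma~\ref{lemma:estimation_g}), Efron--Stein applied to the real coordinates of $T(\bx)$ and of $\bz^*T(\bx)$, together with exchangeability of the $\ba_i$, gives
\[
\Expect\|T(\bx)-\Expect T(\bx)\|^2\le \tfrac{m}{2}\,\Expect\|T(\bx)-T^{(1)}(\bx)\|^2,\qquad \mathrm{Var}[\bz^*T(\bx)]\le \tfrac{m}{2}\,\Expect\big|\bz^*\big(T(\bx)-T^{(1)}(\bx)\big)\big|^2.
\]
So it suffices to prove $\Expect\|T(\bx)-T^{(1)}(\bx)\|^2\le Cn/m^2$ and $\Expect|\bz^*(T(\bx)-T^{(1)}(\bx))|^2\le C/m^2$.

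Next I would make the difference explicit via the same leave-one-out device used to prove Lemma~\ref{lemma:mean}. Put $M=\Sigma_1=\sum_{j\ge 2}\ba_j\ba_j^*$ and $R=\sum_{j\ge 2}g_j(\bx)$, which depend only on $\ba_2,\dots,\ba_m$ and hence are independent of $(\ba_1,\ba_1')$. Because $g_1(\bx)=c\,\ba_1$ for the scalar $c=\frac{|\ba_1^*\bz|}{|\ba_1^*\bx|}\ba_1^*\bx$ with $|c|=|\ba_1^*\bz|$, Sherman--Morrison applied to $\bA^*\bA=M+\ba_1\ba_1^*$ collapses to
\[
T(\bx)=M^{-1}R+\frac{(c-\ba_1^*M^{-1}R)\,M^{-1}\ba_1}{1+\ba_1^*M^{-1}\ba_1},
\]
and likewise for $T^{(1)}(\bx)$ with $(\ba_1,c)$ replaced by $(\ba_1',c')$. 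The term $M^{-1}R$ is common and cancels in the difference, the denominators are $\ge 1$, so by the triangle inequality and exchangeability
\[
\Expect\|T(\bx)-T^{(1)}(\bx)\|^2\le 4\,\Expect\Big[\,|c-\ba_1^*M^{-1}R|^2\,\|M^{-1}\ba_1\|^2\,\Big],
\]
with the analogous bound for $\Expect|\bz^*(T(\bx)-T^{(1)}(\bx))|^2$ after replacing $\|M^{-1}\ba_1\|^2$ by $|\bz^*M^{-1}\ba_1|^2$.

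To finish, I would condition on $(M,R)$, so that $\ba_1\sim CN(0,\bI)$ is Gaussian and independent of the fixed data $M,R$, and then decouple the two factors by Cauchy--Schwarz. Using $|c|=|\ba_1^*\bz|$, the fact that $\ba_1^*M^{-1}R$ and $\bz^*M^{-1}\ba_1$ are centred complex Gaussians with variances $\|M^{-1}R\|^2$ and $\|M^{-1}\bz\|^2\le\|M^{-1}\|^2$, and the Gaussian fourth-moment bounds $\Expect\|M^{-1}\ba_1\|^4\le Cn^2\|M^{-1}\|^4$ and $\Expect|\bz^*M^{-1}\ba_1|^4\le C\|M^{-1}\bz\|^4$, the conditional expectations are at most $Cn\big(\|M^{-1}\|^2+\|M^{-1}\|^4\|R\|^2\big)$ and $C\big(\|M^{-1}\|^2+\|M^{-1}\|^4\|R\|^2\big)$ respectively; the disappearance of the factor $n$ in the second estimate is exactly the gain from projecting the random direction $\ba_1$ onto the single fixed vector $\bz$. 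Taking the outer expectation, what remains is $\Expect\|M^{-1}\|^2\le C/m^2$ and $\Expect\big[\|M^{-1}\|^4\|R\|^2\big]\le C/m^2$: the first is a standard extreme-singular-value estimate for the Gaussian matrix with rows $\ba_2^*,\dots,\ba_m^*$, and the second follows by H\"older from that estimate and the tail bound of Lemma~\ref{lemma:estimation_g} applied to the block $\ba_2,\dots,\ba_m$ (which gives $\Expect\|R\|^{2q}\le C_q m^{2q}$). I expect the only genuinely delicate point to be controlling $\|M^{-1}\|$ on the exceptional event that $M=\Sigma_1$ is ill-conditioned: one needs a few finite high-order moments of $\|M^{-1}\|$, and this is precisely where the hypothesis $m>C_3 n$ with $C_3$ sufficiently large enters, since a complex Wishart matrix built from $m-1>C_3 n$ vectors in $\bbC^n$ satisfies $\Expect\|M^{-1}\|^{p}\le C_p m^{-p}$ for every fixed $p$. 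The remaining manipulations are routine Gaussian moment computations.
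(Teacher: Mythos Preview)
Your proposal is correct and follows essentially the same route as the paper. The paper uses the conditional-variance (tensorization) form of Efron--Stein and compares $T(\bx)$ to the version with $\ba_j$ \emph{deleted} rather than resampled, but from that point on both arguments apply Sherman--Morrison to isolate the dependence on $\ba_j$, arrive at the same conditional bound of order $\|\Sigma_j^{-1}\|^2+\|\Sigma_j^{-1}\|^4\|\sum_{i\ne j}g_i(\bx)\|^2$ (with the extra factor $n$ for the full-vector variance), and close with the singular-value moment estimate~\eqref{eq:inverse_norm} together with Lemma~\ref{lemma:estimation_g}. Your explicit identity $T(\bx)=M^{-1}R+\frac{(c-\ba_1^*M^{-1}R)\,M^{-1}\ba_1}{1+\ba_1^*M^{-1}\ba_1}$ makes the cancellation of the common part transparent, and your remark that high moments of $\|M^{-1}\|$ require $m/n$ large enough is exactly where the hypothesis $m>C_3n$ enters in the paper as well.
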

We first prove Theorem~\ref{thm:main1}, with the proofs of lemmas deferred.
\begin{proof}[Proof of Theorem~\ref{thm:main1}]
Applying the Chebyshev's inequality to Lemma~\ref{lemma:var}, we have that with probability at least $1-2/\log n^2$, we have
\begin{equation}\label{eq:Chebyshev}
\|T(\bx)-\Expect T(\bx)\|<C\sqrt{n}\log  n/\sqrt{m}, \|\bz^*T(\bx)-\bz^* \Expect T(\bx)\|<C\log  n/\sqrt{m}.
\end{equation}
In addition, $\Expect\left(\frac{1}{1+ \tr(\Sigma_i^{-1})}\Sigma_i^{-1}\right)$ is a scalar matrix and \eqref{eq:gassian_sigma} implies that with probability $1/2$, the largest singular value and the smallest singular value of $\Sigma_i$ are both in the order of $1/m$, so there exists some $c=O(1)$ such that its diagonal entries are larger than $c/m$.

Lemma~\ref{lemma:expectation} implies that angle between $\bz^*$ and $g_i(\bx)$ satisfies
 \[
\frac{|\bz^*\Expect g_i(\bx)|}{\|\Expect g_i(\bx)\|}=\sin\left(\theta(\bx)+\tan^{-1}\frac{h'(\theta(\bx))}{h(\theta(\bx))}\right).
 \]
Combining it with  $\|\Expect g_i(\bx)\|\geq 1$ (which follows from Lemma~\ref{lemma:expectation}), $\Expect\left(\frac{1}{1+ \tr(\Sigma_i^{-1})}\Sigma_i^{-1}\right)=\frac{c}{m}\bI$ with $c=o(1)$, \eqref{eq:Chebyshev}, and Lemma~\ref{lemma:mean},
 \begin{align*}
\frac{|\bz^*T(\bx)|}{\|T(\bx)\|}\geq \frac{c\sin\left(\theta(\bx)+\tan^{-1}\frac{h'(\theta(\bx))}{h(\theta(\bx))}\right)-C\frac{\log n}{\sqrt{m}}}{c+C\frac{\sqrt{n}\log n}{\sqrt{m}}}.
 \end{align*}

Then Theorem~\ref{thm:main1} is proved by applying $ \theta(T(\bx))=\sin^{-1}\left(\frac{|\bz^*T(\bx)|}{\|T(\bx)\|}\right)$.
\end{proof}
\subsection{Proof of Auxiliary Lemmas for Theorem~\ref{thm:main1}}
\begin{proof}[Proof of Lemma~\ref{lemma:expectation}]
The proof is based on the observation that $g_i(\bx)$ is the derivative of $|\ba_i^*\bx||\ba_i^*\bz|$. In particular, this work defines the derivatives of real valued functions over complex variables as follows: $\grad f(x)$ is chosen such that
\[
f(x+\Delta x)=f(x)+\mathrm{re}(\grad f(x)^*\Delta x)+o(|\Delta x|).
\]
Then we can define $G(\bx)=\sum_{i=1}^nG_i(\bx)$ with $G_i(\bx)=|\ba_i^*\bx|$. Then we have $g_i(\bx)=\grad G_i(\bx)$ and $g(\bx)=\grad G(\bx)$.

In addition, we can calculate $\Expect G_i(\bx)$. Since the expectation is invariant to unitary transformations of $\bx$ and $\bz$ and $\theta(\bx)=\sin^{-1}(\frac{|\bx^*\bz|}{\|\bx\|\|\bz\|})$, WLOG we may phase $\bz$ so that $\bx^*\bz$ is nonnegative  and assume that $\bz=(1,0,\cdots,0)$ and $\bx=(\sin(\theta),\cos(\theta),0,\cdots,0)$. Then it is clear that \begin{align*}&\Expect[G_i(\bx)]
=\Expect_{[a_1,a_2]\sim CN(0,\bI)}\Big[|a_1||a_1\sin\theta+a_2\cos\theta|\Big]
=h(\theta).
\end{align*}
Since $\Expect[G_i(\bx)]$ only depends on the $\theta(\bx)$  and $\|\bx\|$, its derivative is only nonzero at two directions: $\bx$ and the direction where $\theta(\bx)$ changes most.
Since the function $G_i$ has the property $G_i(\bx+t\bx)=(1+t)G_i(\bx)$, we have
\[
\bx^*\grad \Expect[G_i(\bx)] = \Expect[G_i(\bx)].
\]
By definition, $\bd$ is the direction where $\theta(\bx)$ changes most, that is, $\bd=\argmax_{\|\by\|=1,\by\in\mathbb{C}^n}\frac{\theta(\bx+t\by)-\theta(\bx)}{t}$, and  $\theta(\bx+t\bd)=\theta(\bx)+t+O(t^2)$. Combining it with $\|\bx+t\bd\|=\|\bx\|+O(t^2)$, we have
\[
\bd^*\grad \Expect[G_i(\bx)]=h'(\theta)_{\theta=\theta(\bx)}.
\]
Combining the above observations together, Lemma~\ref{lemma:expectation} is proved.
\end{proof}

\begin{proof}[Proof of Lemma~\ref{lemma:mean}]
The proof of Lemma~\ref{lemma:mean} is based on an upper bound of $\|\Sigma^{-1}\|$ for $\Sigma=\bA^*\bA$. To start, we apply the result from~\cite[Theorem 1.1]{Tao2010} that for any for any $n\times n$ complex normal matrix $\bA$,
\begin{equation}\label{eq:inverse_norm0}
\Pr\left(\sigma_{\min}(\bA)\leq t\sqrt{n}\right)<t.
\end{equation}
For any $m\times n$ complex normal matrix $\bA$, we denote its smallest singular value by $\sigma_{\min}(\bA)$. Since $\bA$ contains $\lfloor{\frac{m}{n}}\rfloor$ independent submatrices of size $n\times n$, and $\sigma_{\min}(\bA)$ is larger than the smallest singular value of any submatrix of $\bA$, we have
\begin{equation}\label{eq:inverse_norm0}
\Pr\left(\sigma_{\min}(\bA)\leq t\sqrt{n}\right)<t^{\lfloor{\frac{m}{n}}\rfloor},
\end{equation}
We may also apply the result from~\cite[Theorem II.13]{Szarek:survey} that for any $m\times n$ matrix $\Gamma$ that is i.i.d. sampled from real Gaussian distribution $CN(0,1)$, we have
\[
 \Pr\left(\sqrt{m}+\sqrt{n}+t \geq \sigma_1(\Gamma)\geq \sigma_{n}(\Gamma)\geq \sqrt{m}-\sqrt{n}-t \right)>1-\exp(-t^2/2).
\]
Combining it with $\sigma_{n}(\bA)\geq \sigma_{n}(\im(\bA))$ and $\sigma_{1}(\bA)\leq \sigma_1(\mathrm{re}(\bA))+\sigma_1(\im(\bA))$, \begin{equation}\label{eq:gassian_sigma}
\Pr\left\{\sigma_{n}(\bA)\geq \frac{1}{\sqrt{2}}(\sqrt{m}-\sqrt{n}-t),\,\,\sigma_{1}(\bA)\leq {\sqrt{2}}(\sqrt{m}+\sqrt{n}+t)\right\}>1-2\exp(-t^2/2).
\end{equation}
As a result, we have
\begin{align}\label{eq:inverse_norm}
\Pr\left(\sigma_{\min}(\bA)\leq t\right)\leq \min\left(\frac{t}{\sqrt{n}}^{\lfloor{\frac{m}{n}}\rfloor},2\exp\left(-\frac{(\sqrt{m}-\sqrt{n}-t\sqrt{2})^2}{2}\right)\right)\\
\leq \begin{cases}\frac{t}{\sqrt{n}}^{\lfloor{\frac{m}{n}}\rfloor},\,\,\text{if $t<\exp\left(-n\right)$}\\2\exp\left(-\frac{(\sqrt{m}-\sqrt{n}-t\sqrt{2})^2}{2}\right),\,\,\text{if $t\geq \exp\left(-n\right)$}. \end{cases}
\end{align}

Now let us estimate the upper bound of $\Expect \|\Sigma^{-1}\|^2$. Since $\|\Sigma^{-1}\|=\sigma_{\min}(\bA)^{-2}$, so
\begin{align*}
&\Expect \|\Sigma^{-1}\|^2\leq \frac{8}{m^2}+\int_{t=1/m^2}^{\infty}\Pr(\|\Sigma^{-1}\|^2>t)
\leq \frac{8}{m^2}+\int_{t=8/m^2}^{\infty}\Pr(\sigma_{\min}(\bA)<\frac{1}{\sqrt{t}})\di t\\\leq& \frac{8}{m^2}+\int_{t=8/m^2}^{\exp(n/2)}2\exp\left(-\frac{(\sqrt{m}-\sqrt{n}-\sqrt{2/t})^2}{2}\right)\di t+\int_{t=\exp(n/2)}^{\infty}\frac{1}{\sqrt{tn}}^{\lfloor{\frac{m}{n}}\rfloor}\di t\\
\leq & \frac{8}{m^2}+2\exp(n/2)\exp\left(-\frac{(\sqrt{m}/2-\sqrt{n})^2}{2}\right)+\exp(-n/4)\leq \frac{C}{m^2},
\end{align*}
where the last two steps uses the assumption that $m>Cn$ and $n>C\log m$.

In addition, for any fixed $n\times n$ matrix $\Sigma$, Hanson-Wright inequality~\cite{rudelson2013} 
\[
\Pr\left(|\ba_i^*\Sigma\ba_i-\tr(\Sigma)|>t\right)\leq 2\exp\left[-c\min\left(\frac{t^2}{\|\Sigma\|_F^2},\frac{t}{\|\Sigma\|}\right)\right]
\]
implies
\begin{equation}\label{eq:intermediate}
\Pr\left(|\ba_i^*\Sigma\ba_i-\tr(\Sigma)|>t\sqrt{n}\|\Sigma\|\right)\leq 2\exp\left[-c\min(t^2,t\sqrt{n})\right].
\end{equation}
Applying the Sherman–Morrison formula,
\[
 T(\bx)= \Sigma^{-1}\sum_{i=1}^mg_i(\bx)=\sum_{i=1}^m[\Sigma_i^{-1}-\frac{\Sigma_i^{-1}\ba_i\ba_i^*\Sigma_i^{-1}}{1+\ba_i^*\Sigma_i^{-1}\ba_i}]g_i(\bx)
=\sum_{i=1}^m\frac{1}{1+\ba_i^*\Sigma_i^{-1}\ba_i}\Sigma_i^{-1}g_i(\bx),
\]
we have
\begin{align}\nonumber
&\left\|\Expect T(\bx)- \sum_{i=1}^m \Expect \left(\frac{1}{1+ \tr(\Sigma_i^{-1})} \Sigma_i^{-1}\right)\Expect g_i(\bx)
\right\|=m\left\|\Expect\left[ \left(\frac{1}{1+\ba_i^*\Sigma_i^{-1}\ba_i}-\frac{1}{1+ \tr(\Sigma_i^{-1})}\right)\Sigma_i^{-1}g_i(\bx)\right]\right\|\\
\leq &m\Expect\left\| \left(\frac{1}{1+\ba_i^*\Sigma_i^{-1}\ba_i}-\frac{1}{1+ \tr(\Sigma_i^{-1})}\right)\Sigma_i^{-1}g_i(\bx)\right\|\leq m\Expect\left\| \left({\ba_i^*\Sigma_i^{-1}\ba_i}-{ \tr(\Sigma_i^{-1})}\right)\Sigma_i^{-1}g_i(\bx)\right\|\label{eq:mean_diff},
\end{align}
where the last inequality follows from the fact that
\[
\left|\frac{1}{1+\ba_i^*\Sigma_i^{-1}\ba_i}-\frac{1}{1+ \tr(\Sigma_i^{-1})}\right|\leq \left|{\ba_i^*\Sigma_i^{-1}\ba_i}-{ \tr(\Sigma_i^{-1})}\right|.
\]

Applying \cite[Proposition 2.2(1)]{measure2005}, for any $t>1$, with probability $1-t^2\exp(-t^2+1)$, $|\ba_i^*\bz|<t$ and with probability $1-t^{2n}\exp(-(t^2-1)n)$, $\|\ba_i\|<t\sqrt{n}$, which means that with probability $1-t^{2n}\exp(-(t^2-1)n)-t^2\exp(-t^2+1)$, $\|g_i(\bx)\|<t^2\sqrt{n}$. Combining it with the \eqref{eq:intermediate}, the RHS of \eqref{eq:mean_diff} can be estimated by
\begin{align*}
&m\Expect\left\| \left({\ba_i^*\Sigma_i^{-1}\ba_i}-{ \tr(\Sigma_i^{-1})}\right)\Sigma_i^{-1}g_i(\bx)\right\|
\\=&m\Expect_{\{\ba_j\}_{1\leq j\leq m, j\neq i}}\left[\Expect_{\ba_i}\left\| \left({\ba_i^*\Sigma_i^{-1}\ba_i}-{ \tr(\Sigma_i^{-1})}\right)\Sigma_i^{-1}g_i(\bx)\right\|\right]\\
\leq & C n m\Expect_{\{\ba_j\}_{1\leq j\leq m, j\neq i}}\|\Sigma_i^{-1}\|^2.
\end{align*}

Combining it with \eqref{eq:inverse_norm}, we have \eqref{eq:mean1}:
\[
\Big|\Expect T(\bx)- \sum_{i=1}^m \Expect \left(\frac{1}{1+ \tr(\Sigma_i^{-1})} \Sigma_i^{-1}\right)\Expect g_i(\bx)\Big|<C.
\]
The proof of \eqref{eq:mean2} is similar to the proof of \eqref{eq:mean1}, with the estimation of $\|g_i(\bx)\|$ replaced by $|\bz^*g_i(\bx)|$. For $|\bz^*g_i(\bx)|$ we have
\[
|\bz^*g_i(\bx)|=\left|\bz^*\frac{\ba_i^*\bz}{\ba_i^*\bx}\ba_i\ba_i^*\bz\right|=|\ba_i^*\bz|^2,
\]
and $\ba_i^*\bz\sim CN(0,1)$, so with probability at least $1-2\exp(-t^2/2)$, $|\ba_i^*g_i(\bx)|<t^2$.
\end{proof}

\begin{proof}[Proof of Lemma~\ref{lemma:estimation_g}]

Let $\Sp(\bx,\bz)$ be the two-dimensional subspace spanned by $\bx$ and $\bz$, and $\bP_{\Sp(\bx,\bz)^\perp}\in\bbC^{n\times n-2}$ be a projector matrix to the $n-2$-dimensional subspace orthogonal to $\Sp(\bx,\bz)$, then
\[
\bP_{\Sp(\bx,\bz)^\perp}g(\bx)=\sum_{i=1}^m|\ba_i^*\bz|\frac{\ba_i^*\bx}{|\ba_i^*\bx|}\bP_{\Sp(\bx,\bz)^\perp}\ba_i,
\]
where $\bP_{\Sp(\bx,\bz)^\perp}\ba_i\in \bbC^{n-2}$ is i.i.d. sampled from $CN(0,\bI)$ and is independent with respect to $|\ba_i^*\bz|\frac{\ba_i^*\bx}{|\ba_i^*\bx|}$. As a result, $\bP_{\Sp(\bx,\bz)^\perp}g(\bx)\in\bbC^{n-2}$ is a vector whose elements are i.i.d. sampled from $CN(0,\sum_{i=1}^m |\ba_i^*\bz|^2)$.

Applying Hansen-Wright inequality~\cite{rudelson2013}, we have
\[
\Pr(\|\bP_{\Sp(\bx,\bz)^\perp}g(\bx)\|^2>2tn \sum_{i=1}^m |\ba_i^*\bz|^2)<\exp(-Cnt^2),
\]
and 
\begin{equation*}
\|\bP_{\Sp(\bx,\bz)}g(\bx)\|\leq \sum_{i=1}^n\|\bP_{\Sp(\bx,\bz)}\ba_i\ba_i^*\bz\|\leq \sum_{i=1}^n\|\bP_{\Sp(\bx,\bz)}\ba_i\|^2.
\end{equation*}
In addition, Berstein's inequality implies that there exists $C>0$ such that
\begin{equation}
\Pr(\sum_{i=1}^n|\ba_i^*\bz|^2>Ct)<\exp(-t^2),\,\,\,\,\,\Pr(\sum_{i=1}^n\|\bP_{\Sp(\bx,\bz)}\ba_i\|^2>Ct)<\exp(-t^2).
\end{equation}

Combining these estimations together with
\[
\|g(\bx)\|\leq \|\bP_{\Sp(\bx,\bz)}g(\bx)\|+\|\bP_{\Sp(\bx,\bz)^\perp}g(\bx)\|,
\]
the lemma is proved.
\end{proof}
\begin{proof}[Proof of Lemma~\ref{lemma:var}]
First, we apply the following Lemma, which is a straightforward generalization of the Tensorization of variance theorem~\cite[Theorem 2.3]{Handel_course} to the complex setting:
\begin{lemma}\label{lemma:tensorization}
For complex random variables $X_1,\cdots,X_n$ and $f: \mathbb{C}^n\rightarrow\mathbb{C}$, we have 
\[
\mathrm{Var}[f(X_1,\cdots,X_n)]\leq \Expect\left[\sum_{i=1}^n\mathrm{Var}_i(f(X_1,\cdots,X_n))\right],
\]
where $\mathrm{Var}_i$ is the variance of $f$ with respect to the variable $X_i$ only,  the remaining variables being kept fixed.
\end{lemma}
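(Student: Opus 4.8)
The plan is to adapt the standard martingale (Doob) decomposition proof of the Efron--Stein / tensorization inequality to the complex-valued setting, the only substantive modification being the bookkeeping of complex conjugates in the orthogonality step. Assume $X_1,\dots,X_n$ are independent and write $\mathcal{F}_k=\sigma(X_1,\dots,X_k)$ with $\Expect_k[\,\cdot\,]=\Expect[\,\cdot\mid\mathcal{F}_k]$, so that $\Expect_0[f]=\Expect[f]$ and $\Expect_n[f]=f$. Setting $\Delta_k=\Expect_k[f]-\Expect_{k-1}[f]$ gives the telescoping identity $f-\Expect[f]=\sum_{k=1}^n\Delta_k$, and hence $\mathrm{Var}[f]=\Expect\bigl[\bigl|\sum_k\Delta_k\bigr|^2\bigr]=\sum_{j,k}\Expect[\Delta_j\overline{\Delta_k}]$.

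First I would establish orthogonality of the increments in the Hermitian inner product. For $j>k$ the variable $\overline{\Delta_k}$ is $\mathcal{F}_k$-measurable, hence $\mathcal{F}_{j-1}$-measurable, while $\Expect_{j-1}[\Delta_j]=0$ by construction; conditioning on $\mathcal{F}_{j-1}$ therefore kills the cross term, $\Expect[\Delta_j\overline{\Delta_k}]=\Expect[\overline{\Delta_k}\,\Expect_{j-1}[\Delta_j]]=0$, and the case $j<k$ follows by taking complex conjugates. Only the diagonal survives, so $\mathrm{Var}[f]=\sum_{k=1}^n\Expect[\,|\Delta_k|^2\,]$. This is the one place the complex setting differs from the real one: the expansion of $|\cdot|^2$ produces terms $\Delta_j\overline{\Delta_k}$ that must carry the conjugate in the correct slot, but the martingale-difference property is insensitive to this.

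Next I would identify $\Expect[|\Delta_k|^2]$ with a conditional variance. By independence of $X_k$ from $\mathcal{F}_{k-1}$, the operator $\Expect_{k-1}$ acts on the $\mathcal{F}_k$-measurable variable $Y:=\Expect_k[f]$ by integrating out $X_k$ alone, so $\Expect[|\Delta_k|^2\mid\mathcal{F}_{k-1}]=\mathrm{Var}_{X_k}(\Expect_k[f])$, the variance taken over $X_k$ with $X_1,\dots,X_{k-1}$ held fixed. Writing $\Expect_k[f]=\Expect_{X_{>k}}[f]$ as the average of $f$ over the future coordinates, I would introduce an independent copy $X_k'$ and use the two-point representation $\mathrm{Var}_{X_k}(\psi)=\tfrac12\Expect_{X_k,X_k'}|\psi(X_k)-\psi(X_k')|^2$ together with Jensen's inequality for the convex map $w\mapsto|w|^2$ to obtain $\mathrm{Var}_{X_k}(\Expect_{X_{>k}}[f])\le\Expect_{X_{>k}}[\mathrm{Var}_{X_k}(f)]$. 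Taking full expectation then yields $\Expect[|\Delta_k|^2]\le\Expect[\mathrm{Var}_k(f)]$, and summing over $k$ gives the claim.

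The argument is essentially routine once the complex orthogonality is set up, so I do not anticipate a genuine obstacle; the only point demanding care is the Jensen step, where I must keep the conjugation consistent so that $|\cdot|^2$ is genuinely convex on $\mathbb{C}\cong\mathbb{R}^2$ (it is, being a squared Euclidean norm), which guarantees that the contraction inequality $\mathrm{Var}_{X_k}(\Expect_{X_{>k}}[f])\le\Expect_{X_{>k}}[\mathrm{Var}_{X_k}(f)]$ holds verbatim as in the real-valued case.
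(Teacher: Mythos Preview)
Your proof is correct. You reproduce the Efron--Stein/tensorization argument directly in the complex setting: the martingale-difference orthogonality works because $\Expect[\Delta_j\overline{\Delta_k}]=0$ for $j\neq k$ exactly as in the real case, and the contraction step $\mathrm{Var}_{X_k}(\Expect_{X_{>k}}[f])\le\Expect_{X_{>k}}[\mathrm{Var}_{X_k}(f)]$ goes through because $w\mapsto|w|^2$ is convex on $\mathbb{C}\cong\reals^2$. You also rightly invoke independence of the $X_i$, which the statement leaves implicit but which is needed (and is assumed in the cited reference).

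The paper takes a shorter, different route: it simply observes that for a complex random variable $\mathrm{Var}[f]=\mathrm{Var}[\mathrm{re}(f)]+\mathrm{Var}[\im(f)]$, applies the real-valued tensorization inequality \cite[Theorem~2.3]{Handel_course} separately to $\mathrm{re}(f)$ and $\im(f)$, and adds the two bounds, using that $\mathrm{Var}_i[\mathrm{re}(f)]+\mathrm{Var}_i[\im(f)]=\mathrm{Var}_i[f]$ termwise. This buys brevity by outsourcing the work to the known real result. Your approach is more self-contained and shows explicitly that nothing in the Doob-decomposition proof is sensitive to the scalar field, at the cost of redoing the argument; either is a valid way to handle the complex case.
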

\begin{proof}
Applying $\mathrm{Var}[f]=\mathrm{Var}[\mathrm{re}(f)]+\mathrm{Var}[\im(f)]$ and the same argument as in the proof of \cite[Theorem 2.3]{Handel_course} for both the real and the imaginary part, the lemma is proved.
\end{proof}

Applying Lemma~\ref{lemma:tensorization}, denote the variance when $\{\ba_i\}_{i\neq j}$ are fixed by
\[
\mathrm{Var}_j(\bz^* T(\bx)),
\]
then we have
\begin{equation}\label{eq:tensor}
\mathrm{Var}(\bz^* T(\bx))\leq \Expect \sum_{j=1}^m[\mathrm{Var}_j(\bz^* T(\bx))].
\end{equation}

Then
\begin{align*}
&\mathrm{Var}_j(\bz^* T(\bx)) \leq [\bz^*\Sigma^{-1}\sum_{i=1}^mg_i(\bx)-\bz^*\Sigma_i^{-1}\sum_{i=1,i\neq j}^mg_i(\bx)]^2\\=&\Big[\bz^*\Sigma^{-1}g_j(\bx)-(1+\ba_j^*\Sigma_j^{-1}\ba_j)^{-1}\bz^*\Sigma_j^{-1}\ba_j\ba_j^*\Sigma_j^{-1}\sum_{i=1,i\neq j}^mg_i(\bx)\Big]^2\\ \leq & 2\Big[\bz^*\Sigma^{-1}\ba_j \frac{|\ba_j^*\bz|}{|\ba_j^*\bx|}\ba_j^*\bx\Big]^2+2\Big[\bz^*\Sigma_j^{-1}\ba_j\ba_j^*\Sigma_j^{-1}\sum_{i=1,i\neq j}^mg_i(\bx)\Big]^2\\
\leq & 4\Big[\bz^*\Sigma^{-1}_j\ba_j \frac{|\ba_j^*\bz|}{|\ba_j^*\bx|}\ba_j^*\bx\Big]^2+4\Big[\frac{1}{1+\ba_j^*\Sigma_j^{-1}\ba_j}\bz^*\Sigma^{-1}_j\ba_j\ba_j^*\Sigma^{-1}_j\ba_j \frac{|\ba_j^*\bz|}{|\ba_j^*\bx|}\ba_j^*\bx\Big]^2+2\Big[\bz^*\Sigma_j^{-1}\ba_j\ba_j^*\Sigma_j^{-1}\sum_{i=1,i\neq j}^mg_i(\bx)\Big]^2\\
\leq &8\Big[\bz^*\Sigma^{-1}_j\ba_j |\ba_j^*\bz|\Big]^2+2\Big[\bz^*\Sigma_j^{-1}\ba_j\ba_j^*\Sigma_j^{-1}\sum_{i=1,i\neq j}^mg_i(\bx)\Big]^2.
\end{align*}
Consider that when $\{\ba_i\}_{1\leq i\leq n, i\neq j}$ are fixed and $\ba_j\sim CN(0,1)$, then $\bz^*\Sigma^{-1}_j\ba_j\sim CN(0,\|\bz^*\Sigma^{-1}_j\|^2)$, $\ba_j^*\bz\sim CN(0,\|\bz\|^2)=CN(0,1)$, and $\ba_j^*\Sigma_j^{-1}\sum_{i=1,i\neq j}^mg_i(\bx)\sim CN(0, \|\Sigma_j^{-1}\sum_{i=1,i\neq j}^mg_i(\bx)\|^2)$, so
\begin{align*}
&\Expect \mathrm{Var}_j(\bz^* T(\bx))\leq C \Expect \left[\|\bz^*\Sigma^{-1}_j\|^2 +\|\bz^*\Sigma^{-1}_j\|^2\|\Sigma_j^{-1}\sum_{i=1,i\neq j}^mg_i(\bx)\|^2\right]\\\leq& C \Expect \left[\|\Sigma^{-1}_j\|^2 +\|\Sigma^{-1}_j\|^4\|\sum_{i=1,i\neq j}^mg_i(\bx)\|^2\right].
\end{align*}
Combining it with the estimation of $\|\Sigma_j^{-1}\|$ in \eqref{eq:inverse_norm} and the estimation of $\|\sum_{1\leq i\leq m, i\neq j}^mg_i(\bx)\|$ in Lemma~\ref{lemma:estimation_g} (note that the estimation of $\|\sum_{1\leq i\leq m, i\neq j}^mg_i(\bx)\|$ is identical to the estimation of $\|g(\bx)\|=\|\sum_{1\leq i\leq m}^mg_i(\bx)\|$), we have
\[
\Expect \mathrm{Var}_j(\bz^*T(\bx))<C/m^2.
\]
Applying \eqref{eq:tensor}, we have  $\mathrm{Var} [\bz^* T(\bx)]<C/m$.

Similarly, we can prove the other inequality by showing that any vector $\be_i$, whose $i$-th element is $1$ and other elements are zero,  $\mathrm{Var} [\be_i^* T(\bx)]<C/m$.
\end{proof}

\section{Simulations}\label{sec:simu}
\begin{figure}
\begin{center}
\includegraphics[width=0.45\textwidth]{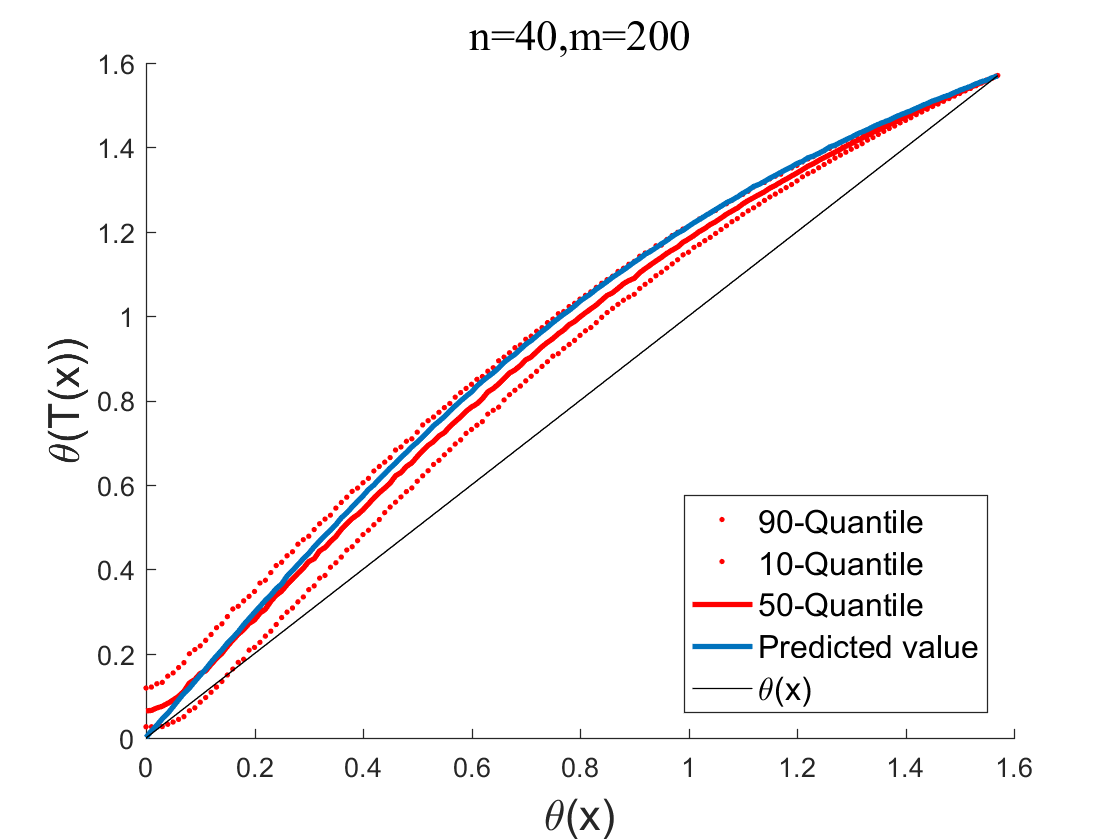}
\includegraphics[width=0.45\textwidth]{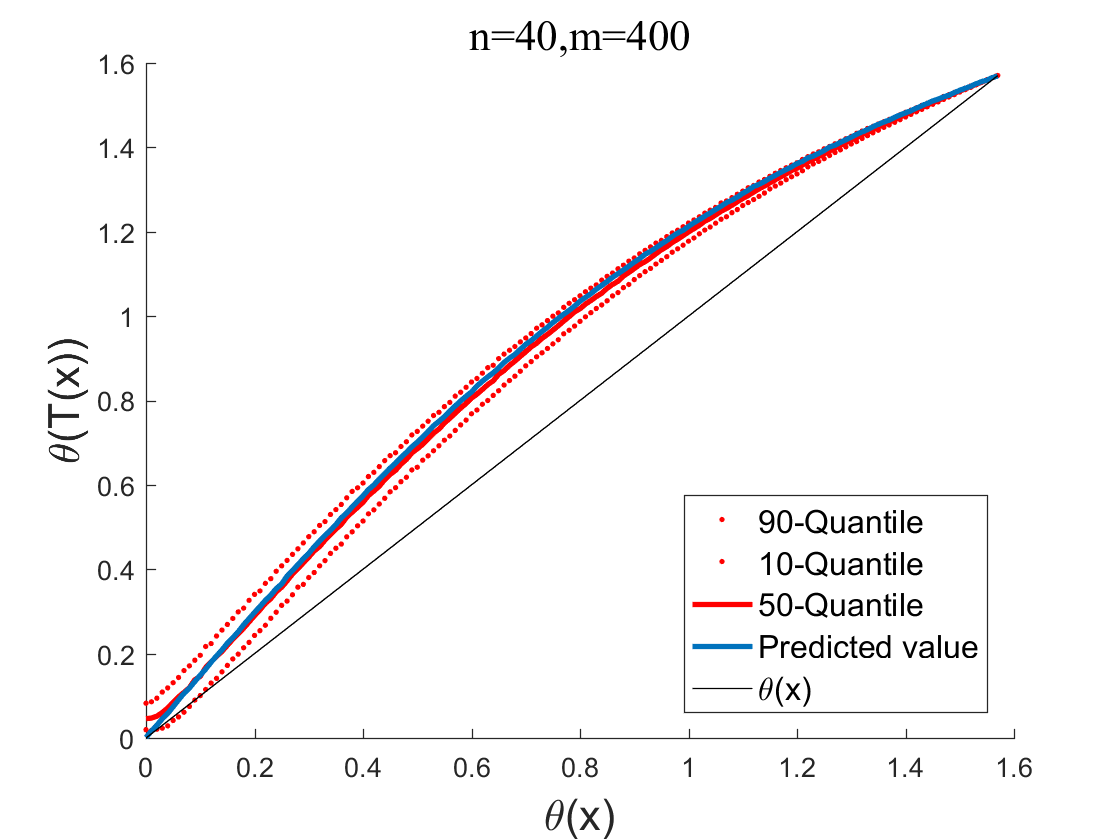}
\includegraphics[width=0.45\textwidth]{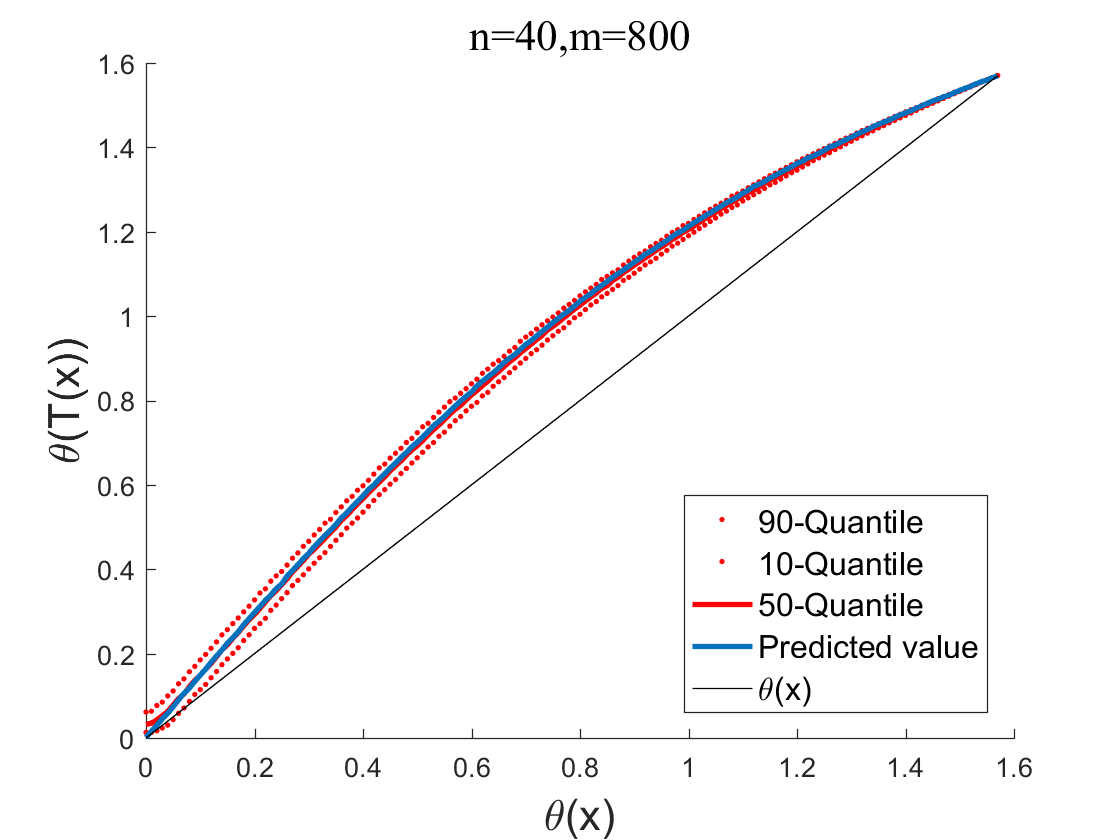}
\includegraphics[width=0.45\textwidth]{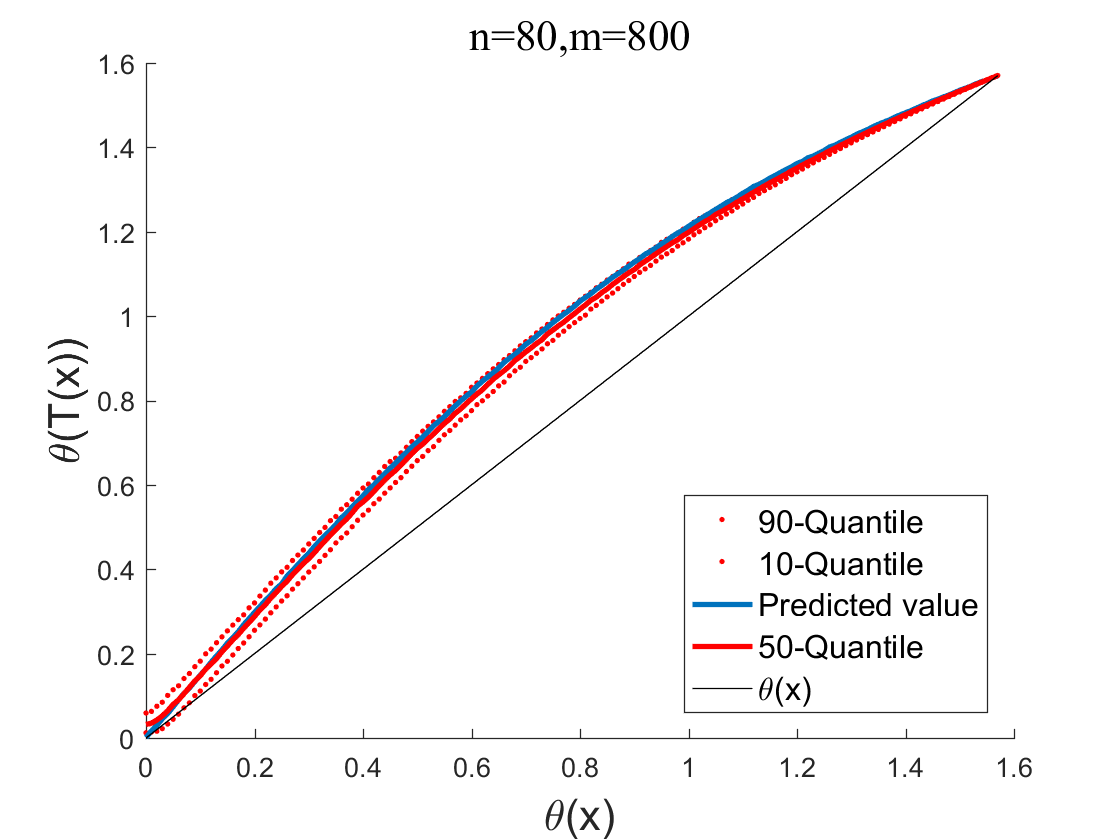}
\end{center}
\caption{Comparison between the predicted and the empirical value of $\theta(T(\bx))$, with various settings of $(n,m)$.}\label{fig:conjecture2}
\end{figure}
This section aims to verify the result in Theorem~\ref{thm:main1}. In particular, we would like to investigate whether empirically, $\theta(\bx)$ and $\theta(T(\bx))$ have the  relation predicted by Theorem~\ref{thm:main1} and its proof:
\begin{equation}\label{eq:predicted}
\theta(T(\bx))\approx \theta(\bx)+\tan^{-1}\frac{h'(\theta(\bx))}{h(\theta(\bx))}.
\end{equation}
For this purpose, we run simulations and compare the empirically observed $\theta(T(\bx))$ and the predicted values.  We run two simulations with different settings of $n, m$. For each setting and each $\theta(\bx)$, we repeat the alternating minimization algorithm randomly by $1000$ times and visualize the $10\%, 50\%, 90\%$ quantile of the observed $\theta(T(\bx))$ in Figure~\ref{fig:conjecture2}, as well as the predicted value in \eqref{eq:predicted}. The figure clearly indicates that our predicted value is close to the empirical values, and as a result, $T(\theta(\bx))>\theta(\bx)$ with high probability as long as $\theta(\bx)$ is not too small, which means that with high probability, the alternating minimization algorithm monotonically reduces the angle between the estimated and the underlying signal. In addition, the variance of the distribution of $\theta(T(\bx))$ is shown to be on the order of $1/\sqrt{m}$.

\section{Summary and Future Directions}
This work analyzes the performance of the alternating minimization algorithm for phase retrieval. Theoretical analysis shows that the angle between the current iteration and the underlying signal is reduced at each iteration with high probability.  Based on this observation, it is shown that alternating minimization in a batch setting with random initialization can recover the underlying signal as long as $m=O(n\log^{5}n)$.

A future direction is the analysis of standard alternating minimization without the batch setting. Current work only analyzes the performance of phase retrieval per iteration, as discussed at the end of Section~\ref{sec:discussion}, it does not apply to the standard alternating minimization algorithm. We hope to find a way to uncouple the correlation between $\bx^{(k)}$ and $\bA$, to prove the conjecture that alternating minimization algorithm succeeds with $m=O(n)$. It is also interesting to improve the probabilistic estimation in this work, for example, finding the exact value of $C_0$ and possibly remove the logarithmic factors from the current estimation.

\section{Appendix}
\begin{proof}[Proof of Lemma~\ref{lemma:conjecture}]
Write it in terms of real variables, we have
\[
h(\theta)=\Expect_{a_1,a_2,b_1,b_2\sim N(0,1)}\sqrt{a_1^2+b_1^2}\sqrt{(a_1\sin\theta+a_2\cos\theta)^2+(b_1\sin\theta+b_2\cos\theta)^2}
\]
Using $(\sqrt{f(x)})''=(\frac{1}{2}f(x)^{-1/2}f'(x))'=\frac{1}{2}f(x)^{-1/2}f''(x)-\frac{1}{4}f(x)^{-3/2}f'(x)^2$ and \begin{align*}&[(a_1\sin\theta+a_2\cos\theta)^2+(b_1\sin\theta+b_2\cos\theta)^2]'\\=&2(a_1^2-a_2^2+b_1^2-b_2^2)\sin\theta\cos\theta+2(a_1a_2+b_1b_2)(\cos^2\theta-\sin^2\theta)\\
&[(a_1\sin\theta+a_2\cos\theta)^2+(b_1\sin\theta+b_2\cos\theta)^2]''\\=&2(a_1^2-a_2^2+b_1^2-b_2^2)(\cos^2\theta-\sin^2\theta)-8(a_1a_2+b_1b_2)\cos\theta\sin\theta,\end{align*}
where
\begin{align*}
h''(\theta)=\Expect \frac{\sqrt{a_1^2+b_1^2}}{[f(\theta)]^{\frac{3}{2}}}&\Big[f(\theta)[(a_1^2-a_2^2+b_1^2-b_2^2)(\cos^2\theta-\sin^2\theta)-4(a_1a_2+b_1b_2)\cos\theta\sin\theta]\\-&[(a_1^2-a_2^2+b_1^2-b_2^2)\sin\theta\cos\theta+(a_1a_2+b_1b_2)(\cos^2\theta-\sin^2\theta)]^2\Big],
\end{align*}
and
\[
h''(0)=\Expect \frac{\sqrt{a_1^2+b_1^2}}{[a_2^2+b_2^2]^{\frac{3}{2}}}\Big[[a_2^2+b_2^2][a_1^2+b_1^2-a_2^2-b_2^2]-[a_1a_2+b_1b_2]^2\Big].
\]
Using the fact that when $a_1^2+b_1^2$ and $a_2^2+b_2^2$ are fixed, then under this conditional distribution, $\Expect [a_1a_2+b_1b_2]^2=\frac{1}{2}[a_1^2+b_1^2][a_2^2+b_2^2]$, we have
\begin{align*}
&h''(0)=\Expect \frac{\sqrt{a_1^2+b_1^2}}{[a_2^2+b_2^2]^{\frac{3}{2}}}\Big[\frac{1}{2}[a_2^2+b_2^2][a_1^2+b_1^2]-[a_2^2+b_2^2]^2\Big]
\\=&\Expect \frac{1}{2}[a_2^2+b_2^2]^{-\frac{1}{2}}[a_1^2+b_1^2]^{\frac{3}{2}}-\sqrt{a_1^2+b_1^2}\sqrt{a_2^2+b_2^2}.
\end{align*}
Applying
\[
\Expect (a_1^2+b_1^2)^k=\frac{1}{\pi}\int_{x,y}(x^2+y^2)^ke^{-{x^2-y^2}}\di x\di y=2 \int_{r=0}^\infty r^{2k+1}e^{-r^2}\di r=\int_{z=0}^\infty z^k e^{-z}\di z=\Gamma(k+1),
\]
$h''(0)=\frac{1}{2}\Gamma(\frac{1}{2})\Gamma(\frac{5}{2})-\Gamma(\frac{3}{2})^2=\frac{\pi}{8}>0$.
Using the fact that
\[
h''(\phi)=\frac{\di}{\di\theta}\Expect
\sqrt{(-a_1\sin\phi+a_2\cos\phi)^2+(-b_1\sin\phi+b_2\cos\phi)^2}\sqrt{(a_1\sin\theta+a_2\cos\theta)^2+(b_1\sin\theta+b_2\cos\theta)^2}\Big|_{\theta=0}
\]
and applying the same procedure as in the calculation of $h''(0)$, we have
\begin{equation}\label{eq:second_derivative}
h''(\phi)=\Expect \frac{\sqrt{(-a_1\sin\phi+a_2\cos\phi)^2+(-b_1\sin\phi+b_2\cos\phi)^2}}{[a_2^2+b_2^2]^{\frac{3}{2}}}\Big[[a_2^2+b_2^2][a_1^2+b_1^2-a_2^2-b_2^2]-[a_1a_2+b_1b_2]^2\Big],
\end{equation}
and as a special case,
\begin{align*}
&h''(\frac{\pi}{2})=\Expect \Big[\frac{1}{2}[a_1^2+b_1^2]-[a_2^2+b_2^2]\Big]=-\frac{1}{2}\Gamma(2)=-1.
\end{align*}

Next, we will show that $h''(\theta)$ is well-defined and Lipschitz continuous. In fact, applying \eqref{eq:second_derivative} and the fact that $(-a_1\sin\phi_1+a_2\cos\phi_1)^2-(-a_1\sin\phi_2+a_2\cos\phi_2)^2<|\phi_1-\phi_2|^2(a_1^2+a_2^2)$,
\begin{align*}
|h''(\phi_1)-h''(\phi_2)|
\leq &\Expect|\phi_1-\phi_2|\frac{\sqrt{a_1^2+b_1^2}+\sqrt{a_2^2+b_2^2}}{[a_2^2+b_2^2]^{\frac{3}{2}}}\Big[[a_2^2+b_2^2][a_1^2+b_1^2]+[a_2^2+b_2^2]^2+[a_1a_2+b_1b_2]^2\Big]\\
\leq &\Expect|\phi_1-\phi_2|\frac{\sqrt{a_1^2+b_1^2}+\sqrt{a_2^2+b_2^2}}{[a_2^2+b_2^2]^{\frac{3}{2}}}\Big[\frac{3}{2}[a_2^2+b_2^2][a_1^2+b_1^2]+[a_2^2+b_2^2]^2\Big].
\end{align*}
Then we obtain the Lipschitz continuity of $h''(\theta)$ with Lipschitz factor given by
\[
L=\Expect\frac{\sqrt{a_1^2+b_1^2}+\sqrt{a_2^2+b_2^2}}{[a_2^2+b_2^2]^{\frac{3}{2}}}\Big[\frac{3}{2}[a_2^2+b_2^2][a_1^2+b_1^2]+[a_2^2+b_2^2]^2\Big]=\frac{3}{2}\Gamma(\frac{1}{2})\Gamma(\frac{5}{2})+\Gamma(2).
\]

Then to prove for all $0<\theta<\pi/2$, $h'(\theta)\geq c\min(\theta,\pi/2-\theta)$,
 it is sufficient to verify that \begin{equation}\label{eq:verify}\text{$\min_{\frac{\pi}{16L}<\theta<\frac{\pi}{2}-\frac{1}{2L}}h'(\theta)>c$ for some $c>0$.}\end{equation} Since $h''(\pi/2)=-1$ and $h''(\theta)$ has a Lipschitz factor $L$, $h'(\theta)$ is also Lipschitz continuous with a Lipschitz factor $1 + \pi L$. Therefore, \eqref{eq:verify} can be verified by numerically by checking a few values of $h'(\theta)$ in the interval $\frac{\pi}{16L}<\theta<\frac{\pi}{2}-\frac{1}{2L}$. More specifically, it is sufficient to verify that for $\theta=\frac{\pi}{16L},\frac{\pi}{16L}+\delta,\frac{\pi}{16L}+2\delta,\cdots,\frac{\pi}{2}-\frac{1}{2L}$, $h'(\theta)>c+\delta/(1 + \pi L)$. Using a computer with $\delta=1/10$, it is verified as shown in Figure~\ref{fig:conjecture}.

Based on the Lipschitz continuity of $h'(\theta)$ we can verify the Lipschitz continuity of $h$ in $[0,\pi/2]$. Using a similar procedure as above, we can show that there exists $c'>0$ such that $\min_{0\leq \theta<\pi/2}h(\theta)<c'$, by checking a few functional values of $h(\theta)$ for $\theta\in[0,\pi/2]$.
\end{proof}
To visualize Lemma~\ref{lemma:conjecture}, we randomly reproduce $10^6$ samples of $(a_1,a_2)$, calculate the average values of $h(\theta)$ and $h'(\theta)$ and plot them in Figure~\ref{fig:conjecture}. The right figure verifies that Lemma~\ref{lemma:conjecture} holds. We remark that if $a_1$ and $a_2$ are sampled from real Gaussian distribution $N(0,1)$, then $h(\theta)={\frac{1}{\pi}} [2\theta\sin\theta+2\cos\theta]$, but in the complex setting, the calculation is more complicated and there is no known explicit formula.
\begin{figure}
\begin{center}
\includegraphics[width=0.45\textwidth]{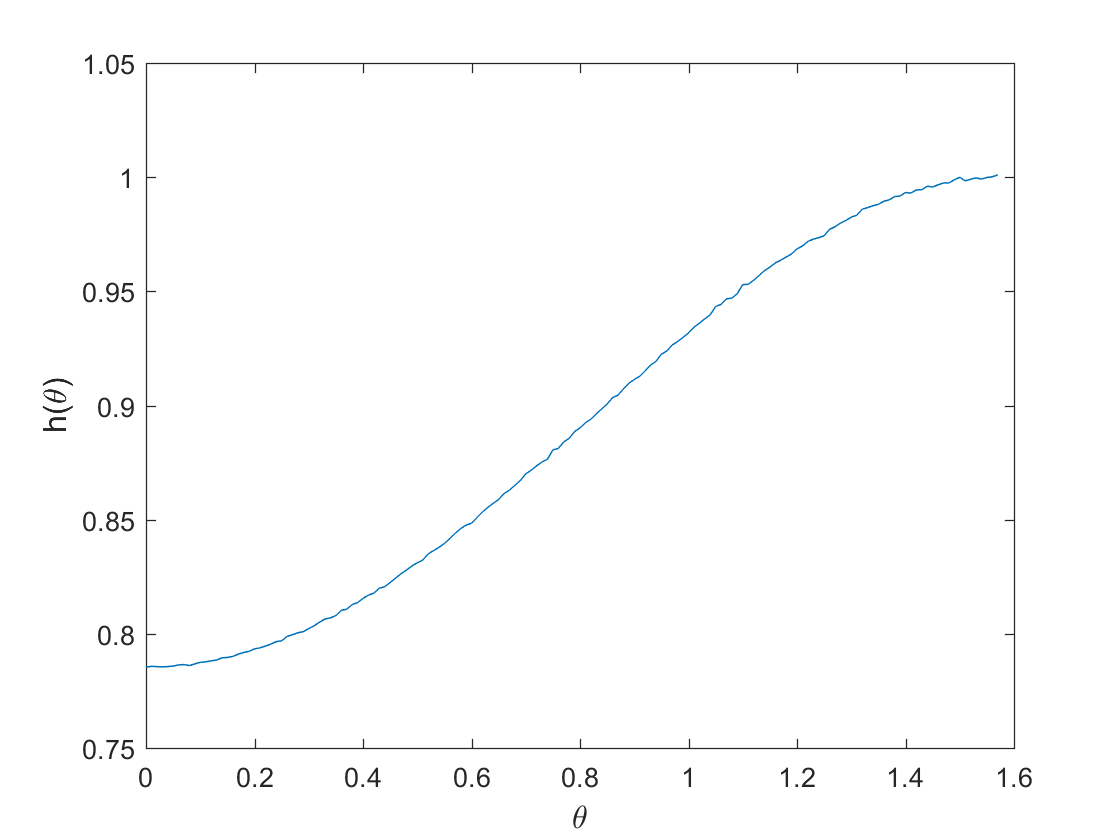}
\includegraphics[width=0.45\textwidth]{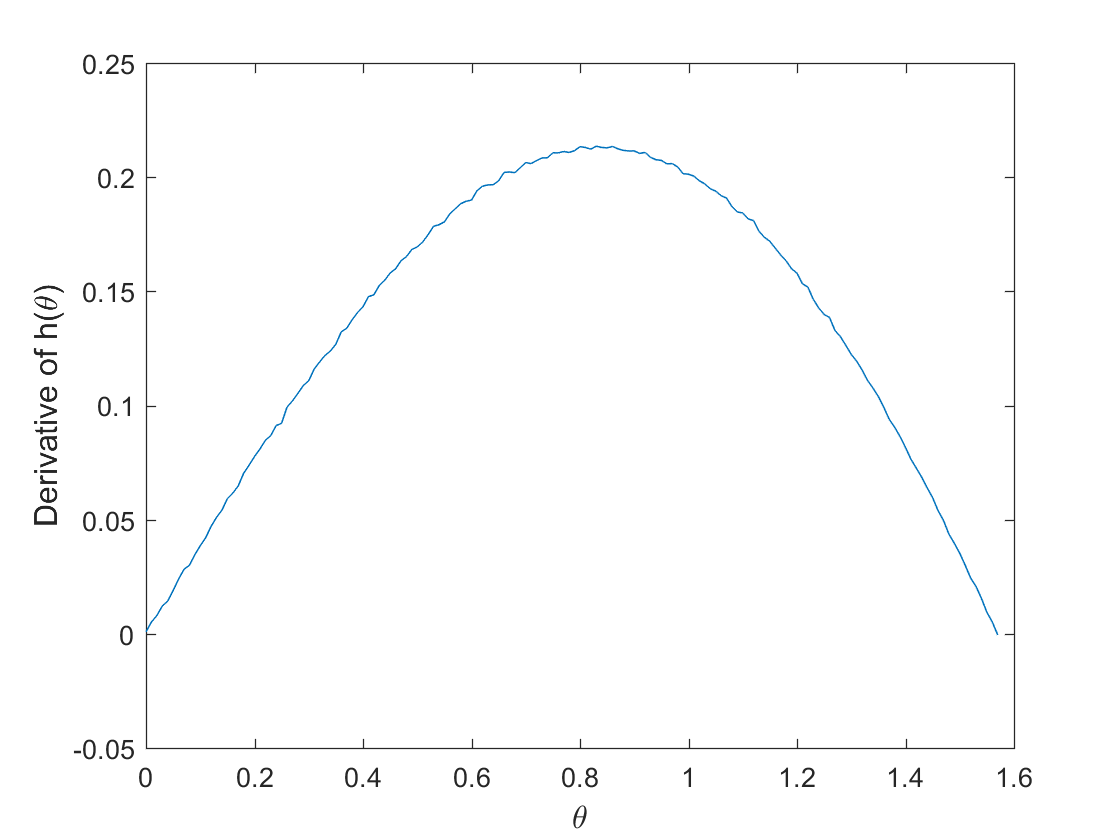}
\end{center}
\caption{$h(\theta)$ and $h'(\theta)$, calculated from the average of $10^6$ simulations.}\label{fig:conjecture}
\end{figure}

\bibliographystyle{abbrv}
\bibliography{bib-online}
\end{document}